\newtheorem{theorem}{Theorem}[section]
\newtheorem{lemma}{Lemma}
\newtheorem{proposition}{Proposition}
\newtheorem{corollary}{Corollary}
\newtheorem{remark}{Remark}
\DeclareMathOperator{\im}{im}
\newcommand{\mb}[1]{\mathbb{#1}}
\newcommand{\mf}[1]{\mathfrak{#1}}
\newcommand{\mc}[1]{\mathcal{#1}}
\newcommand{\tx}{\text}
\newcommand{\ti}{\textit}
\newcommand{\pmtx}[1]{\begin{pmatrix} #1 \end{pmatrix}}
\newcommand{\blk}[3][\@nil]{
 \def\tmp{#1}
   \ifx\tmp\@nnil
       \begin{#2}
       		#3
       \end{#2}    	
    \else
       \begin{#2}[#1]
       		#3
       \end{#2}	
    \fi
}
\NewDocumentEnvironment{diagram}{O{}}
  {\begin{center}\begin{tikzcd}[#1]}
  {\end{tikzcd}\end{center}}
\title{The Conley-Zehnder Index of Brownian Paths on $\tx{Sp}(2, \mb{R})$}
\author{Yuchen Fu}
\begin{document}

\maketitle

\begin{abstract}
	We investigate the probability distribution of Conley-Zehnder indices associated with Brownian random paths on $\tx{Sp}(2n, \mb{R})$ that start at the identity. In the case of $n = 1$, we prove that the distribution has the same moment asymptotics as the standard random walk on the real line. We also present numerical evidence suggesting that the same asymptotics should hold for general $n$.
\end{abstract}

\section{Introduction}
Let $\tx{Sp}(2n, \mb{R})$ denote the group of linear symplectomorphisms $\mb{R}^{2n} \to \mb{R}^{2n}$, which can also be considered as the matrix group $\{A \in \tx{GL}(2n, \mb{R}) \mid A^t J_0 A = J_0\}$ for $J_0 = \pmtx{0 & I_n \\ - I_n & 0}$ under a fixed basis. Let $(\phi_t)_{0 \le t \le 1}$ be a continuous path on $\tx{Sp}(2n, \mb{R})$ such that $\phi_0 = I$ and $\det(\phi_1 - I) \neq 0$. To each such path we can assign an integer-valued Maslov-type index called the \ti{Conley-Zehnder index} (see Section~\ref{sec:CZ-index} for the full definition).

The Lie group structure of $\tx{Sp}(2n, \mb{R})$ allows us to define Brownian motion on the group (see Section~\ref{sec:random-walk} for definitions), which almost always has continuous sample paths that have well-defined Conley-Zehnder indices. In this way we have a probability distribution on $\mb{Z}$ for the Conley-Zehnder indices of these paths. Let $X$ be a random variable having such a probability distribution, where the Brownian motion has difffusitivity $c$. Let $\mb{E}$ denote the expectation of random variables. In this paper, we prove that for $n = 1$, we have the following asymptotics:

\begin{theorem}
	$\mb{E}(X^{2k + 1}) = 0$, $\mb{E}(X^{2k}) = \Theta(c^k)$ for each $k \in \mb{N}$.
\end{theorem}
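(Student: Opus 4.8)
The plan is to compare the Conley--Zehnder index of a Brownian path with an intrinsic rotation number, and then to prove a diffusive moment estimate for that rotation number. First I reduce to a large-time statement: since the generator of a diffusivity-$c$ Brownian motion is $\tfrac c2\Delta$, such a motion on $[0,1]$ has the same law as a diffusivity-$1$ Brownian motion on $[0,c]$ (a time change), and $\mu_{\tx{CZ}}$ is invariant under increasing reparametrization; so it is equivalent to show, for the standard Brownian motion $(\phi_t)$ on $\tx{Sp}(2,\mb R)=\tx{SL}(2,\mb R)$ and $X_T:=\mu_{\tx{CZ}}(\phi|_{[0,T]})$, that $\mb E(X_T^{2k+1})=0$ and $\mb E(X_T^{2k})=\Theta(T^k)$ as $T\to\infty$. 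Using that $\tx{SL}(2,\mb R)$ deformation retracts onto $\tx{SO}(2)$ via the polar decomposition $\phi=UP$ (with $U\in\tx{SO}(2)$ and $P$ positive-definite symmetric unimodular), let $\Theta_T\in\mb R$ be the continuous lift along $\phi|_{[0,T]}$ of the angle of $U$, normalized by $\Theta_0=0$. Because the retraction is a homotopy equivalence, attaching a generator of $\pi_1(\tx{Sp}(2,\mb R))\cong\mb Z$ to a path changes $\Theta$ by $2\pi$ and its index by $2$; combined with the homotopy invariance of $\mu_{\tx{CZ}}$ recalled in Section~\ref{sec:CZ-index}, this shows that $X_T-\tfrac1\pi\Theta_T$ depends only on the endpoint $\phi_T$ and is bounded by an absolute constant $C_0$ (alternatively one reads this off the explicit index formula for $\tx{Sp}(2,\mb R)$). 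Hence $|X_T-\tfrac1\pi\Theta_T|\le C_0$ almost surely, and the theorem reduces to $\mb E(\Theta_T^{2k+1})=0$ and $\mb E(\Theta_T^{2k})=\Theta(T^k)$.

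For the odd moments I use the reflection symmetry $\alpha\colon A\mapsto gAg^{-1}$ with $g=\tx{diag}(1,-1)$, an involutive automorphism of $\tx{SL}(2,\mb R)$ satisfying $\alpha(R(\psi))=R(-\psi)$. On the Lie algebra, $\tx{Ad}_g$ is diagonal with entries $\pm1$ in a basis adapted to the decomposition $\mf{sl}_2(\mb R)=\mf{so}(2)\oplus\mf p$, hence orthogonal for any $\tx{Ad}_{\tx{SO}(2)}$-invariant inner product --- in particular the one defining the Brownian motion --- so $(\alpha\phi_t)_t$ has the same law as $(\phi_t)_t$. Moreover $\alpha$ respects the polar decomposition and reverses the $\tx{SO}(2)$-angle, so $\Theta_T(\alpha\phi)=-\Theta_T(\phi)$, and $\alpha$ is anti-symplectic ($g^{T}J_0g=-J_0$), so $X_T(\alpha\phi)=-X_T(\phi)$ by naturality of $\mu_{\tx{CZ}}$ under anti-symplectic conjugation. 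Therefore $\Theta_T\overset{d}{=}-\Theta_T$ and $X_T\overset{d}{=}-X_T$, and all odd moments vanish.

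For the even moments I study $\Theta_T$ through the polar decomposition of the left-invariant Brownian motion. The hyperbolic part is itself Markov: $\phi_{t+s}^{T}\phi_{t+s}=g_s^{T}P_t^{2}g_s$ for an independent increment $g_s$, so $P_t$ is Brownian motion on the symmetric space $\tx{SO}(2)\backslash\tx{SL}(2,\mb R)\cong\mb H^2$, which is transient. An It\^o computation in $\mf{sl}_2(\mb R)$ gives $d\Theta_t=\sigma(P_t)\,dW_t+b(P_t)\,dt$; writing $Y=P\,dW\,P^{-1}$, the angular coefficient is, up to normalization, $\tfrac{1}{\tx{tr}(P)}$ times the $\mf{so}(2)$-component of $YP-PY^{T}$, and diagonalizing $P$ shows $\sigma(P)^{2}$ is bounded above and below by positive constants uniformly on $\mb H^2$. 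It remains to show the finite-variation part $V_T=\int_0^T b(P_s)\,ds$ is negligible: I expect $b\equiv 0$ (so the angle is a conformal martingale), and in any event $b(P_t)\to 0$ as $P_t$ escapes to the ideal boundary of $\mb H^2$, so transience makes $V_T$ bounded in $L^{2k}$ uniformly in $T$. Granting this, $\Theta_T=M_T+V_T$ with $M$ a continuous martingale and $\langle M\rangle_T=\int_0^T\sigma(P_s)^2\,ds\asymp T$ almost surely; the Burkholder--Davis--Gundy and Doob inequalities give $\mb E(M_T^{2k})\asymp\mb E(\langle M\rangle_T^{k})\asymp T^{k}$, so by $|a+b|^{2k}\le 2^{2k-1}(|a|^{2k}+|b|^{2k})$ (upper bound) and by $\mb E(\Theta_T^{2k})\ge(\mb E\,\Theta_T^2)^k$ with $\mb E\,\Theta_T^2=\mb E\langle M\rangle_T+2\,\mb E(M_TV_T)+\mb E V_T^2\ge\mb E\langle M\rangle_T-2(\mb E M_T^2\cdot\mb E V_T^2)^{1/2}\gtrsim T-O(T^{1/2})\gtrsim T$ (lower bound), one gets $\mb E(\Theta_T^{2k})=\Theta(T^k)$. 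Transferring back via $T\leftrightarrow c$ and absorbing the bounded correction $X_T-\tfrac1\pi\Theta_T$ yields $\mb E(X^{2k})=\Theta(c^k)$.

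The main obstacle is the control of the drift $b$: showing the polar angle is, up to a bounded remainder, a drift-free martingale on the circle, or at least that $\int_0^T b(P_s)\,ds$ does not grow with $T$. This is the only step that genuinely passes from the formal generator to an explicit SDE in polar (or Iwasawa) coordinates together with transience asymptotics of Brownian motion on $\mb H^2$; it is morally the statement that the rotation number of the linear Brownian flow on $\mb R^2$ obeys a central limit theorem with nondegenerate variance and with convergence of moments, which I would either cite from the literature on rotation numbers of two-dimensional linear stochastic flows or reprove in this explicit setting. The nondegeneracy bound $\sigma^2\asymp 1$ above and all of the moment estimates are then routine, and the homotopy-theoretic comparison $|X_T-\tfrac1\pi\Theta_T|\le C_0$ is supplied by Section~\ref{sec:CZ-index}.
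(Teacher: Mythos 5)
Your strategy is genuinely different from the paper's: you compare $\mu_{CZ}$ with the lifted polar angle $\Theta_T$ of the Brownian path and then try to prove a diffusive moment estimate for $\Theta_T$ directly, whereas the paper goes through Theorem~\ref{thm:morse-index} (index $=$ signature of the discrete-action Hessian), a random tridiagonal matrix, a randomized Mobius-transformation evolution, and finally two-sided Gaussian (Norris--Stroock/Kohatsu-Higa) bounds for the Fokker--Planck equation of the limiting angular SDE. Your time-change reduction and the odd-moment symmetry via $g=\tx{diag}(1,-1)$ are sound, but for a minor repair: the inner product making $e_1,e_2,e_3$ orthonormal is \emph{not} $\tx{Ad}_{\tx{SO}(2)}$-invariant (e.g.\ $\tx{Ad}_{R(\pi/4)}e_1=e_2+e_3$, which has a different norm), so the law of the Brownian motion is not rotation-conjugation invariant; what saves the odd-moment argument is only that $\tx{Ad}_g e_i=\pm e_i$ for this particular $g$. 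The same non-invariance means you cannot ``diagonalize $P$'' without also rotating the noise covariance, so even your claimed bounds on $\sigma(P)^2$ require a genuine computation rather than a reduction to diagonal $P$. The comparison $|X_T-\tfrac1\pi\Theta_T|\le C_0$ is also only asserted: homotopy invariance gives that the difference depends on the endpoint alone, but uniform boundedness over $\tx{Sp}^{\pm}$ (boundedness of the winding of $\rho$ along extensions to $W_\pm$ inside a component) is a statement that needs proof, even though it is true for $n=1$.

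The genuine gap is the even-moment step, and it is exactly the point you flag yourself: you neither derive the angular SDE nor control its drift. ``I expect $b\equiv 0$'' is unsubstantiated, and your fallback --- $b(P_t)\to 0$ at the ideal boundary plus transience of Brownian motion on $\mb{H}^2$ implies $V_T=\int_0^T b(P_s)\,ds$ is bounded in $L^{2k}$ uniformly in $T$ --- does not follow as stated: transience gives no rate, and you would need quantitative decay of $b$ in the hyperbolic distance matched against the escape rate (something like $\int_0^\infty \mb{E}|b(P_s)|\,ds<\infty$, and in fact $L^{2k}$ versions of it), none of which is set up. This is not a routine loose end but the heart of the theorem: a bounded, non-decaying drift would generically produce ballistic growth of $\Theta_T$ and moments of order $T^{2k}$ rather than $T^k$. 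It is instructive that in the paper's own angular coordinates the limiting drift is explicitly nonzero, $\tfrac{7c}{8}\sin(2\theta)$, and diffusive behavior is obtained not by showing the drift negligible but by invoking uniform ellipticity together with two-sided heat-kernel estimates (and a separate symmetry argument for odd moments). So either compute the drift in your polar/Iwasawa coordinates and prove the required decay with rates, or replace this step by an estimate of Norris--Stroock/Kohatsu-Higa type as the paper does; as written, the even-moment bound $\mb{E}(\Theta_T^{2k})=\Theta(T^k)$, and hence the theorem, is not proved.
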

Note that this is the same asymptotics as Brownian motion on the real line. We also expect this to hold for general $n$; for some numerical evidence, see the Appendix.

The starting point of the proof is Theorem~\ref{thm:morse-index}, which says that the Conley-Zehnder index is equal to the signature of some large finite-dimensional Hessian matrix. This allows us to convert the problem into studying the eigenvalue distribution of some random matrices, which in turn can be turned into studying the evolution of a single point under randomized Mobius transformations. We describe the continuous limit of this evolution as the solution of some SDE, and obtain the result from a heat-kernel-type estimate on a parabolic PDE that describes the transition probability of its solution.

\subsection{Acknowledgements}
The author thanks Prof. Paul Seidel for proposing the problem and sharing many critical insights, and Umut Varolgunes for mentoring the project and offering numerous inspiring discussions and helpful suggestions throughout the program. The author also thanks the MIT UROP+ program for the opportunity to conduct this project.

\newpage

\section{Discrete Approximation of Action Integral}
\label{sec:action-integral}
We follow the presentation of \cite{mcduff1998introduction}. Let $M$ be a symplectic manifold, with $\omega$ the (closed, nondegenerate) symplectic form. Let $(H_t)_{0 \le t \le 1}: M \to \mb{R}$ be a time-dependent Hamiltonian on $M$. Then the corresponding Hamiltonian vector field $X_t$ is defined by $\iota(X_t) \omega = d H_t$. The flow of this vector field, which is denoted by $(\phi_t): M \to M$ and is defined by the following equation
\[\frac{\partial}{\partial t} \phi_t = X_t \circ \phi_t, \phi_0 = \tx{id}\]
is called a \ti{Hamiltonian isotopy}. It's clear that $(\phi_t)$ are symplectomorphisms of $M$. We say a symplectomorphism $\phi$ is a \ti{Hamiltonian symplectomorphism} if there exists a Hamiltonian isotopy $(\phi_t)$ such that $\phi_1 = \phi$.

Conversely, suppose we have an arbitrary \ti{symplectic isotopy}, i.e. a smooth family of symplectomorphisms $(\phi_t)_{0 \le t \le 1}: M \to M$ such that $\phi_0 = \tx{id}$, then there exists a symplectic vector field $(X_t)$ given by the same equation above, and thus $d \iota(X_t) \omega = 0$ for all $t$. If the first de Rham cohomology group vanishes (which is the case, for instance, when $M$ is simply connected), then there always exists a smooth family of Hamiltonians $(H_t)$ such that $\iota(X_t) \omega = d H_t$.

Let us fix a particular $\lambda \in \Omega^1(M)$ such that $d \lambda = - \omega$. For each time-dependent Hamiltonian $(H_t)$ with the associated Hamiltonian isotopy $(\phi_t)$ we can define the \ti{action integral}
\[\mc{A}_{H}(z) = \int_{0}^{1}(\lambda(\frac{\partial}{\partial t} \phi_t(z))) - H_t(\phi_t(z)) dt\]

For any symplectomorphism $\phi: M \to M$, we can define its graph $\tx{gr}_\phi: M \to M \times M$ given by $\tx{gr}_\phi(x) = (x, \phi(x))$. Let $\alpha \in \Omega^1(M \times M)$ be any 1-form such that $-d \alpha = (-\omega) \oplus \omega$ and $\alpha|_\Delta = 0$ where $\Delta$ denotes the diagonal.

\begin{lemma}[\cite{mcduff1998introduction}, Corollary 9.21]
	If $\phi$ is a Hamiltonian symplectomorphism, then $\tx{gr}_\phi^* \alpha \in \Omega^1(M)$ is exact.
\end{lemma}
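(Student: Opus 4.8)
The plan is to write down an explicit primitive for $\mathrm{gr}_\phi^*\alpha$ by differentiating along a Hamiltonian isotopy. Fix a Hamiltonian isotopy $(\phi_t)_{0\le t\le 1}$ with $\phi_1=\phi$, generated as above by Hamiltonians $(H_t)$ whose Hamiltonian vector fields $X_t$ satisfy $\iota(X_t)\omega=dH_t$, and let $\pi_1,\pi_2\colon M\times M\to M$ be the two projections. Consider the family of graph maps $g_t:=\mathrm{gr}_{\phi_t}\colon M\to M\times M$. Since $\phi_0=\mathrm{id}$, $g_0$ is the diagonal embedding $x\mapsto(x,x)$, so $g_0^*\alpha=0$ by the hypothesis $\alpha|_\Delta=0$; meanwhile $g_1^*\alpha=\mathrm{gr}_\phi^*\alpha$. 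Hence it suffices to show that $\mathrm{gr}_\phi^*\alpha=\int_0^1\frac{d}{dt}\big(g_t^*\alpha\big)\,dt$ is exact, and in fact I will show $\frac{d}{dt}(g_t^*\alpha)$ is exact for each $t$.

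To that end I would invoke the standard formula for the $t$-derivative of a pullback along a time-dependent map: if $V_t$ denotes the velocity field of $g_t$ --- a section of $g_t^*T(M\times M)$, here $V_t(x)=\big(0,\,X_t(\phi_t(x))\big)$ --- then $\frac{d}{dt}(g_t^*\alpha)=g_t^*\big(\iota(V_t)\,d\alpha\big)+d\big(g_t^*\iota(V_t)\alpha\big)$, where the contraction-then-pullback is understood in the evident way. The second term is already exact, being $d$ of the function $x\mapsto\alpha_{(x,\phi_t(x))}\big(0,X_t(\phi_t(x))\big)$. For the first term I would use $d\alpha=\pi_1^*\omega-\pi_2^*\omega$ (this is exactly $-\big((-\omega)\oplus\omega\big)$): since $V_t$ has zero first component, the $\pi_1^*\omega$-piece contracts to $0$, while the $\pi_2^*\omega$-piece yields the $1$-form $v\mapsto\omega\big(X_t(\phi_t(x)),d\phi_t(v)\big)=\big(\phi_t^*\,\iota(X_t)\omega\big)(v)=d(H_t\circ\phi_t)(v)$, using $\iota(X_t)\omega=dH_t$. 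Thus $g_t^*\big(\iota(V_t)d\alpha\big)=-d(H_t\circ\phi_t)$, and altogether $\frac{d}{dt}(g_t^*\alpha)=d\Big(x\mapsto\alpha_{(x,\phi_t(x))}\big(0,X_t(\phi_t(x))\big)-H_t(\phi_t(x))\Big)$.

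Integrating over $t\in[0,1]$ and using $g_0^*\alpha=0$ then gives $\mathrm{gr}_\phi^*\alpha=dF$ with $F(x)=\int_0^1\big[\alpha_{(x,\phi_t(x))}\big(0,X_t(\phi_t(x))\big)-H_t(\phi_t(x))\big]\,dt$; for the admissible choice $\alpha=\pi_2^*\lambda-\pi_1^*\lambda$ this primitive is precisely the action integral $\mathcal{A}_H$ defined above. The argument uses the hypothesis that $\phi$ is Hamiltonian in exactly one spot --- that $\iota(X_t)\omega$ is globally exact rather than merely closed --- which is why the conclusion fails for general symplectomorphisms (a nonexact $\iota(X_t)\omega$ leaves $\mathrm{gr}_\phi^*\alpha$ closed but possibly not exact). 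The only genuine work I anticipate is bookkeeping: pinning down the sign conventions in $d\alpha$ and in the Cartan-type identity, and giving a rigorous justification of the differentiation-under-pullback formula (e.g.\ pull $\alpha$ back along $M\times[0,1]\to M\times M$, $(x,t)\mapsto(x,\phi_t(x))$, split off the $dt$-component, and apply $d^2=0$). One might also remark that $F$ depends on the chosen isotopy and on $\alpha$, but only up to an exact $1$-form --- any two admissible $\alpha$ differ by a closed $1$-form on $M\times M$ vanishing on $\Delta$, whose pullback under $g_1$ is cohomologous, via the homotopy $(g_t)$, to its pullback under $g_0=\delta$, which is zero --- so exactness of $\mathrm{gr}_\phi^*\alpha$ is unambiguous.
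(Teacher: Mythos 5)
Your argument is correct, and it is essentially the standard proof of this statement (the paper itself only cites \cite{mcduff1998introduction} without proof): differentiate $\mathrm{gr}_{\phi_t}^*\alpha$ along the isotopy via the Cartan-type formula, use $d\alpha=\pi_1^*\omega-\pi_2^*\omega$ and the global exactness $\iota(X_t)\omega=dH_t$, and integrate. Your explicit primitive also recovers the identification $S_{(-\lambda)\oplus\lambda,\phi}=\mathcal{A}_H$ stated immediately after the lemma, so nothing further is needed.
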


A function $S_{\alpha, \phi}: M \to \mb{R}$ such that $d S_{\alpha, \phi} = \tx{gr}_\phi^* \alpha$ is called an \ti{$\alpha$-generating function} for $\phi$. For $\phi$ a Hamiltonian symplectomorphism, let $(H_t)$ be the associated Hamiltonian, then we have $S_{(-\lambda) \oplus \lambda, \phi} = \mc{A}_H$.

From now on, we focus on the specific case $M = \mb{R}^{2n}$ and $\omega = J_0 = \pmtx{0 & I_n \\ -I_n & 0}$. We write $(q_1, q_2, p_1, p_2)$ as coordinates for $T^* \mb{R}^{2n}$, where $q_1, q_2 \in \mb{R}^n$ are the base coordinates and $p_1, p_2 \in \mb{R}^n$ are the fibre coordinates. Then the canonical primitive can be written as $\lambda = p_1 d q_1 + p_2 d q_2$. In this setting, there exists a global linear symplectomorphism $\Psi: \mb{R}^{2n} \times \mb{R}^{2n} \to T^* \mb{R}^{2n}$ that sends the diagonal to the zero section in the cotangent bundle, given as follows:
\[\Psi(x_0, y_0, x_1, y_1) = (x_1, y_0, y_1 - y_0, x_0 - x_1)\]
where $x_i, y_i \in \mb{R}^n$. The pullback of the canonical 1-form under this map is given by $\Psi^* \lambda = (y_1 - y_0) d x_1 + (x_0 - x_1) d y_0$. Now suppose $\phi: \mb{R}^{2n} \to \mb{R}^{2n}$ is a symplectomorphism, and write $\phi(x_0, y_0) = (x_1, y_1)$. Suppose $\phi$ is sufficiently $\mc{C}^1$-close to the identity such that there exists a function $G$ such that $G(x_1, y_0) = x_0$ iff $\phi(x_0, y_0) = (x_1, y_1)$ for some $y_1$. Define $V$ by $V(x_1, y_0) = S_{\Psi^* \lambda, \phi}(G(x_1, y_0), y_0)$, then we have
\begin{equation}
\label{eq:typeVgen}
x_1 - x_0 = \frac{\partial V}{\partial y_0}(x_1, y_0) \hspace{1em} y_1 - y_0 = - \frac{\partial V}{\partial x_1}(x_1, y_0)
\end{equation}
We shall refer to a smooth function $V$ such that Equation~\ref{eq:typeVgen} holds as a \ti{generating function of type V}, or simply a generating function of $\phi$.

\begin{lemma}
If we write a linear symplectomorphism $\phi$ in the block form $\pmtx{A & B \\ C & D}$, then it admits a generating function of type $V$ iff $\det(A) \neq 0$, and a generating function is given by
\[V(x_1, y_0) = -\frac{1}{2} \langle x_1, C A^{-1} x_1 \rangle + \langle y_0, (\tx{I}_n - A^{-1}) x_1 \rangle + \frac{1}{2} \langle y_0, A^{-1} B y_0 \rangle\]
where $\langle \cdot, \cdot \rangle$ is the usual dot product on $\mb{R}^n$.
\end{lemma}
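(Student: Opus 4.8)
The proof is a direct computation in linear algebra, so the plan is to first settle the equivalence with $\det(A) \neq 0$ and then verify the displayed formula by substituting it into Equation~\ref{eq:typeVgen}. Before either step I would record the block identities that follow from $\phi^t J_0 \phi = J_0$ (equivalently, from the fact that $\phi^{-1} = -J_0 \phi^t J_0$ is again symplectic): namely $A^t C = C^t A$, $A B^t = B A^t$, and $A^t D - C^t B = I_n$. When $A$ is invertible these three identities say exactly that $C A^{-1}$ is symmetric, that $A^{-1} B$ is symmetric, and that $D - C A^{-1} B = A^{-t}$ — the only facts the rest of the computation will use.

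For the forward implication, suppose $\phi$ admits a generating function $V$ of type $V$. The first equation in~\ref{eq:typeVgen} forces $x_0 = x_1 - (\partial V/\partial y_0)(x_1, y_0)$ to depend on $(x_1, y_0)$ alone. But $x_1 = A x_0 + B y_0$, so if $0 \neq v \in \ker A$ then the inputs $(x_0, y_0)$ and $(x_0 + v, y_0)$ produce the same pair $(x_1, y_0)$ while demanding the incompatible values $x_0$ and $x_0 + v$; hence $\det(A) \neq 0$. Conversely, if $\det(A) \neq 0$ then $(x_0, y_0) \mapsto (A x_0 + B y_0, y_0)$ is a linear isomorphism, so the function $G$ of the preceding discussion exists globally, with $G(x_1, y_0) = A^{-1} x_1 - A^{-1} B y_0$, and it remains only to produce a $V$.

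For that I would take the $V$ of the statement and differentiate. With $x_0 = A^{-1} x_1 - A^{-1} B y_0$ the right-hand sides of~\ref{eq:typeVgen} become $x_1 - x_0 = (I_n - A^{-1}) x_1 + A^{-1} B y_0$ and $y_1 - y_0 = C A^{-1} x_1 + (D - C A^{-1} B - I_n) y_0$. Differentiating $V$ in $y_0$ and using that $A^{-1} B$ is symmetric gives $\partial V/\partial y_0 = (I_n - A^{-1}) x_1 + A^{-1} B y_0$, which matches the first; differentiating in $x_1$ and using that $C A^{-1}$ is symmetric gives $-\partial V/\partial x_1 = C A^{-1} x_1 - (I_n - A^{-t}) y_0$, which matches the second after substituting $D - C A^{-1} B = A^{-t}$. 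Alternatively one can derive the formula rather than guess it: integrating the $y_0$-equation yields $V = \langle y_0, (I_n - A^{-1}) x_1 \rangle + \frac{1}{2} \langle y_0, A^{-1} B y_0 \rangle + f(x_1)$, and then the $x_1$-equation pins down $f'(x_1) = - C A^{-1} x_1$, so $f(x_1) = -\frac{1}{2} \langle x_1, C A^{-1} x_1 \rangle$ up to an irrelevant constant.

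There is no genuine obstacle here; the only things to watch are the transpose conventions in the symplectic relations, which are easy to get backwards and worth double-checking against $\phi^{-1} = -J_0 \phi^t J_0$, and the precise meaning of ``admits a generating function of type $V$'' in the forward implication, where the point is that Equation~\ref{eq:typeVgen} is required to hold for every $(x_0, y_0)$ and therefore already encodes that $x_0$ must be a function of $(x_1, y_0)$.
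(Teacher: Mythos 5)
Your proposal is correct and follows essentially the same route as the paper: verify the formula by differentiating and using the symmetry relations coming from $\phi$ being symplectic, and obtain necessity of $\det(A)\neq 0$ from the observation that a kernel vector of $A$ would make $x_0$ undeterminable from $(x_1, y_0)$. The only difference is that you spell out the $x_1$-derivative check (via $CA^{-1}$ symmetric and $D - CA^{-1}B = A^{-t}$), which the paper dismisses with ``the same goes for the $x_1$ derivative.''
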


\begin{proof}
	Given that $\det(A) \neq 0$, the formula above gives a well-defined function $V$. Recall that $\phi$ being symplectic implies that $A B^t$ is symmetric, which implies $A^{-1} B$ is symmetric, so $\sum_{i = 1}^{n} \langle e_i, A^{-1} B y_0 \rangle + \langle y_0, A^{-1} B e_i \rangle = 2 A^{-1} B y_0$ for basis vectors $e_i$, so we have $\frac{\partial V}{\partial y_0}(x_1, y_0) = (I_n - A^{-1}) x_1 + A^{-1} B y_0 = x_1 - x_0$ because $x_1 = A x_0 + B y_0$. The same goes for the $x_1$ derivative. Conversely, suppose $\phi$ has a generating function $V(x_1, y_0)$, then it is possible to determine $x_0$ from $x_1$ and $y_0$ alone; however, if $A x' = 0$ for some nonzero vector $x'$, then $\phi(x_0, y_0) = (x_1, y_1) \implies \phi(x_0 + a x', y_0) = (x_1, y_1 + a C x')$ for any $a \in \mb{R}$, contradiction.
\end{proof}

Now again let $(\phi_t)$ be a Hamiltonian isotopy. For any $N \in \mb{N}^+$, we can define the \ti{discrete isotopies} $(\psi_j)_{0 \le j < N}$ given by $\psi_j = \phi_{\frac{j + 1}{N}} \phi^{-1}_{\frac{j}{N}}$. For $N$ large enough, all these maps will be close to identity in the sense that they admit corresponding generating functions $(V_j)$. Now for $z = (x_0, \ldots, x_N, y_0, \ldots, y_{N - 1})$ where $x_i, y_i \in \mb{R}^n$ we shall define the \ti{discrete action} as
\[\Phi(z) = \sum_{j = 0}^{N - 1}\langle y_j, x_{j + 1} - x_j \rangle - V_j(x_{j + 1}, y_j)\]
As $N \to \infty$, this value will converge to $\mc{A}_H(z)$ for $\lambda = \langle y, d x \rangle$.

\section{The Conley-Zehnder Index}
\label{sec:CZ-index}
Again our setting is $(M, \omega) = (\tx{Sp}(2n, \mb{R}), J_0)$. Let $\phi = (\phi_t)$ be a linear symplectic isotopy, i.e. $\phi_t \in \tx{Sp}(2n, \mb{R})$ for each $t$. One can of course think of $\phi$ as a path on $\tx{Sp}(2n, \mb{R})$. Now consider the polar decomposition $A = U_A P_A$ where $U_A$ is the unitary part and $P_A$ is the positive semidefinite part. The map $A \mapsto U_A$ is then a continuous retraction $\tx{Sp}(2n, \mb{R}) \to \tx{U}(n, \mb{C})$. Let $\textstyle\det_\mb{C}$ denote the complex determinant, then it is a map from $\tx{U}(n, \mb{C})$ to the unit circle. Denote the composition of these two maps by $\rho: \tx{Sp}(2n, \mb{R}) \to S^1$. Then clearly $\rho(\phi)$ is a path on the unit circle starting at $1$. If $\phi$ is a loop (i.e. $\phi_0 = \phi_1$), then we can define its \ti{Maslov index} $\mu(\phi) = \deg(\rho(\phi))$.

Now suppose $\phi$ is a general path that starts at the identity, and suppose that $\phi_1$ does not land on $\tx{Sp}_0 = \{X \in \tx{Sp}(2n, \mb{R}) \mid \det(X - I) = 0\}$. Let's call such path \ti{admissible}.

\begin{lemma}[\cite{salamon1997lectures}]
\label{lemma:conley-zehnder-def}
	To each admissible path one can assign an integer-valued \ti{Conley-Zehnder index} $\mu_{CZ}(\phi)$ that has the following properties:
	\begin{itemize}
	\item $\mu_{CZ}(\phi)$ is invariant under homotopy of admissible paths.
	\item If $\phi_0 = \phi_1 = \tx{id}$, then for any other admissible path $\psi$, we have $\mu_{CZ}(\phi \psi) = \mu_{CZ}(\psi) + 2 \mu(\phi)$.
	\item If $S$ is a symmetric non-degenerate matrix with all eigenvalues of absolute value less than $2 \pi$, and if $\phi_t = \exp(J_0 S t)$, then $\mu_{CZ}(\phi) = \frac{1}{2} \tx{sign}(S)$, where $\tx{sign}$ is the signature (i.e. number of positive eigenvalues minus that of negative eigenvalues).
	\end{itemize}
	Moreover, these properties uniquely characterize the Conley-Zehnder index.
\end{lemma}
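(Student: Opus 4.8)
The plan is to construct $\mu_{CZ}$ directly from the map $\rho$, check the three listed properties, and then argue that they force the construction. The starting point is a purely topological lemma I would establish first: $\tx{Sp}^* := \tx{Sp}(2n, \mb{R}) \setminus \tx{Sp}_0$ has exactly two path-components, $\tx{Sp}^\pm = \{A \mid \pm \det(A - I) > 0\}$, each containing a distinguished element, $W^+ = -I \in \tx{Sp}^+$ and $W^- = \tx{diag}(2, -1, \ldots, -1, \frac{1}{2}, -1, \ldots, -1) \in \tx{Sp}^-$; moreover $\rho(W^\pm)^2 = 1$ (read off the polar decomposition one symplectic plane at a time: $\rho(W^+) = (-1)^n$, $\rho(W^-) = (-1)^{n-1}$), and, crucially, $\rho$ carries every loop lying in a single component $\tx{Sp}^\epsilon$ to a nullhomotopic loop in $S^1$. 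For $n = 1$ this last point is a short computation — $\tx{Sp}^+(2)$ turns out to be contractible, while $\tx{Sp}^-(2) \simeq S^1$ with the generating loop supported on a family whose orthogonal (hence unitary) part is constant, so $\rho$ is constant along it — and the general $n$ statement is the analogous, if longer, calculation from the normal form theory of $\tx{Sp}(2n, \mb{R})$.

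Granting this, for an admissible $\phi$ set $\epsilon = \tx{sign} \det(\phi_1 - I)$, extend $\phi$ to a path $\tilde\phi : [0, 2] \to \tx{Sp}(2n, \mb{R})$ with $\tilde\phi([1, 2]) \subset \tx{Sp}^\epsilon$ and $\tilde\phi(2) = W^\epsilon$, and define $\mu_{CZ}(\phi)$ to be the winding number of $t \mapsto \rho(\tilde\phi_t)^2$. Since $\rho^2$ equals $1$ at both $I$ and $W^\epsilon$ this is a genuine loop in $S^1$, and it is independent of the chosen extension because two extensions differ by a loop contained in $\tx{Sp}^\epsilon$, which $\rho^2$ sends to something of winding $0$. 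Homotopy invariance then follows because along an admissible homotopy $\phi^s$ the endpoint $\phi^s_1$ stays in one component and, locally in $s$, the extension can be varied continuously (there is no $\pi_1$-obstruction to small perturbations), so $s \mapsto \mu_{CZ}(\phi^s) \in \mb{Z}$ is locally constant. For the catenation property, when $\phi$ is a loop at $\tx{id}$ and $\psi$ is admissible, concatenate $\phi$, then $\psi$, then the extension $\tilde\psi|_{[1,2]}$: winding numbers add, and the $\phi$-segment contributes $\deg(\rho^2 \circ \phi) = 2 \deg(\rho \circ \phi) = 2\mu(\phi)$. For the normalization, I would do $n = 1$ explicitly — three cases according to the signs of the eigenvalues of $S$, the definite case landing in $\tx{Sp}^+$ and giving $\pm 1$, the indefinite (positive-trace hyperbolic) case landing in $\tx{Sp}^-$ and giving $0$ — and then reduce general $S$ using homotopy invariance together with the multiplicativity $\rho(\phi' \oplus \phi'') = \rho(\phi') \rho(\phi'')$ over symplectic direct sums, after deforming $S$ (keeping it nondegenerate, with eigenvalues of absolute value less than $2\pi$, and keeping $\exp(t J_0 S)$ admissible) to a symplectically block-diagonal normal form; the hypothesis $|\lambda(S)| < 2\pi$ is precisely what stops $\rho^2$ from over-winding and makes $\tfrac{1}{2}\tx{sign}(S)$ the correct count.

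For uniqueness, suppose $\mu'$ also satisfies the three properties. Both $W^\pm$ are of the form $\exp(J_0 S^\pm)$ with $|\lambda(S^\pm)| < 2\pi$ ($S^+ = \pi I_{2n}$; $S^-$ a sum of one indefinite $2 \times 2$ block realizing $\tx{diag}(2, \tfrac12)$ and $n - 1$ copies of $\pi I_2$), so the paths $\beta^\pm_t = \exp(t J_0 S^\pm)$ are admissible and $\mu'(\beta^\pm)$ is fixed by the normalization property. Given an arbitrary admissible $\phi$ with $\phi_1 \in \tx{Sp}^\epsilon$, first homotope $\phi$ through admissible paths (by growing or shrinking a tail inside $\tx{Sp}^\epsilon$) to an admissible $\phi'$ ending exactly at $W^\epsilon$, so $\mu'(\phi) = \mu'(\phi')$; then $\phi'$ is homotopic rel endpoints to the concatenation $\theta \beta^\epsilon$, where $\theta := \phi' * \overline{\beta^\epsilon}$ is a loop at $\tx{id}$; applying the catenation property gives $\mu'(\phi) = \mu'(\beta^\epsilon) + 2\mu(\theta)$, a quantity that depends only on the admissible-homotopy class of $\phi$ and on the already-defined Maslov index, not on $\mu'$. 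Hence $\mu' = \mu_{CZ}$.

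The hard part will be the topological lemma about the two components of $\tx{Sp}^*$ and the vanishing of $\rho$ on their loops: everything downstream — well-definedness of the construction, homotopy invariance, and the uniqueness argument — rests on it. After that, the normalization property in general dimension is the most computation-heavy step, since it needs the normal-form reduction of the symmetric matrix $S$ to be carried out compatibly with the symplectic structure and with the admissibility constraint; the recurring technical nuisance throughout is making sure every intermediate homotopy stays within the class of admissible paths, i.e. never lets an endpoint touch $\tx{Sp}_0$.
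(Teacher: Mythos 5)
The paper does not prove this lemma---it is quoted from \cite{salamon1997lectures}, and the text immediately following it describes exactly the construction you use (extend the path inside $\tx{Sp}^\pm$ to $W^\pm$ and take $\deg(\rho^2)$)---so your outline is essentially a reconstruction of the standard proof from the cited source, and it is sound: you correctly isolate the one load-bearing topological fact (loops contained in a single component $\tx{Sp}^\pm$ are killed by $\rho^2$), verify it honestly for $n=1$, and your well-definedness, homotopy-invariance, normalization, and uniqueness arguments are the standard ones. The only points you assert rather than prove are the general-$n$ version of that loop lemma and the general-$n$ normalization computation (plus the routine homotopy identifying the pointwise product $\phi\psi$ with the concatenation in the loop property), which is acceptable here since the paper itself defers the entire statement to the literature.
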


The computation of the index can be done as follows. The complement of $\tx{Sp}_0$ consists of two parts $\tx{Sp}_+ = \{X \mid \det(X - I) > 0\}$ and $\tx{Sp}_- = \{X \mid \det(X - I) < 0\}$, and these two parts are respectively path connected. Given a path $\phi$, first choose an extension $\tilde{\phi}$ that ends at either $W_+ = -I \in \tx{Sp}^+$ or $W_- = \tx{diag}(2, 1/2, -1, \ldots, -1) \in \tx{Sp}^-$ within the component that $\phi_1$ lies in. By homotopy property, the choice of this path does not matter. The Conley-Zehnder index is then given by $\deg(\rho^2(\tilde{\phi}))$.

Since our base manifold is $\mb{R}^{2n}$, $(\phi_t)$ is Hamiltonian, so we can define the corresponding discrete action $\Phi(z)$ as described in the last section. Let $d^2 \Phi(z)$ denote the Hessian of the discrete action; notice that in our current setting, this is a symmetric bilinear form that is independent of where the partial derivatives are evaluated. A simple calculation shows that the Hessian is nondegenerate if and only if $\det(\phi_1 - I) \neq 0$. The following constitutes the basis of our work in this paper:

\begin{theorem}[\cite{mcduff1998introduction}, Remark 9.18]
\label{thm:morse-index}
	Suppose $d^2 \Phi(z)$ is nondegenerate. Then as $N \to \infty$, the signature of $d^2 \Phi(z)$ converges to $\mu_{CZ}(\phi)$.
\end{theorem}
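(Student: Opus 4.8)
The plan is to identify the limiting signature of $d^2\Phi(z)$ with $\mu_{CZ}(\phi)$ by verifying that it has the three properties of Lemma~\ref{lemma:conley-zehnder-def}, which characterize $\mu_{CZ}$ uniquely. Write $\Phi^{(N)}$ for the discrete action built from the $N$-step discrete isotopy, $H^{(N)} = d^2\Phi^{(N)}$ for its (position-independent) Hessian, and $\sigma^{(N)}(\phi) = \tx{sign}\, H^{(N)}$ whenever this is nondegenerate. The first task is to show that $\sigma^{(N)}(\phi)$ is eventually constant in $N$, so that $\nu(\phi) := \lim_{N \to \infty}\sigma^{(N)}(\phi)$ is well-defined --- for an integer-valued sequence, convergence just means eventual stability. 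This rests on two facts. \textbf{(a)} For each fixed $N$ large enough that every step $\psi_j = \phi_{(j+1)/N}\phi_{j/N}^{-1}$ admits an explicit generating function of type V, $H^{(N)}$ depends continuously on the path and, by the computation quoted before the theorem, is nondegenerate exactly when $\det(\phi_1 - I) \neq 0$; since the signature of a symmetric matrix is locally constant on the nondegenerate locus, $\sigma^{(N)}$ is invariant under homotopies of $\phi$ through paths with $\det(\phi_1 - I) \neq 0$, and the lower bound on $N$ can be taken uniform along a compact such homotopy. \textbf{(b)} Replacing one step $\psi_j$ by two near-identity factors $\psi_j''\psi_j' = \psi_j$ introduces one new pair $(x', y') \in \mb{R}^{2n}$ of variables; I claim the enlarged Hessian is congruent to $H^{(N)} \oplus Q$ with $Q$ nondegenerate and $\tx{sign}\, Q = 0$, the near-identity hypothesis being exactly what makes the relevant $2n \times 2n$ block invertible of hyperbolic type. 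Iterating \textbf{(b)} along a common refinement and combining with \textbf{(a)} gives stabilization, and in particular shows $\nu$ is homotopy-invariant.

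It then remains to verify the normalization and catenation properties for $\nu$. For \emph{normalization}, take $\phi_t = \exp(J_0 S t)$ with $S$ symmetric, nondegenerate, and all eigenvalues of absolute value $< 2\pi$. Since $\tx{sign}\, S$ is itself constant on the nondegenerate locus, after a homotopy of $S$ within this range I may assume $S$ is diagonal with small entries; then $J_0 S$ decomposes into $2 \times 2$ blocks pairing conjugate coordinates, the path and hence $\Phi^{(N)}$ split accordingly, and it suffices to treat $n = 1$. There every step equals the single matrix $\exp(J_0 S / N)$, so $H^{(N)}$ is a constant-coefficient difference operator in the time index; I would analyze it by a discrete Fourier transform (up to boundary effects), writing $\tx{sign}\, H^{(N)}$ as a sum of signatures of small explicit Hermitian symbols over the frequencies, check that these symbols are invertible with signature $0$ except for finitely many frequencies near the origin (whose number and signatures stabilize as $N \to \infty$), and verify that the surviving contributions sum to $\tfrac{1}{2}\tx{sign}\, S$ --- the bound $< 2\pi$ guaranteeing that no further resonances are crossed. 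For \emph{catenation}, let $\phi$ be a loop at the identity and $\psi$ admissible; concatenating the $N$-step discretizations of $\phi$ and then of $\psi$ realizes the discrete action of $\phi\psi$ as a sum of the discrete actions of $\phi$ and of $\psi$ coupled only through the position variables at the junction, so that the Hessian of $\phi\psi$ is built from $H^{(N)}_\phi$ and $H^{(N)}_\psi$ by identifying those variables. Analyzing this gluing --- the degeneracies of $H^{(N)}_\phi$, present because $\phi_1 = \tx{id}$, get resolved by the coupling to $H^{(N)}_\psi$ --- shows that $\nu(\phi\psi) - \nu(\psi)$ depends only on the class $[\phi] \in \pi_1(\tx{Sp}(2n, \mb{R})) = \mb{Z}$ and is additive under composition of loops, hence equals $c\,\mu(\phi)$ for some integer $c$; one then pins down $c = 2$ by one explicit instance, e.g. the Fourier computation above applied to $t \mapsto \exp((2\pi + \varepsilon)t J_0)$ on a single conjugate pair, which winds once past a resonance before a short elliptic tail. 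With all three properties established, Lemma~\ref{lemma:conley-zehnder-def} forces $\nu = \mu_{CZ}$, which is the claim.

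I expect the main obstacle to be step \textbf{(b)} together with the companion signature-zero claims in the catenation analysis: each reduces to showing that a specific quadratic form built from near-identity generating functions is nondegenerate with vanishing signature, and making the gluing argument fully rigorous involves some Maslov-type (triple-index, Wall non-additivity) bookkeeping of the boundary data being identified. The cleanest way I see to organize all of this uniformly is to run a block Gaussian elimination on $H^{(N)}$ against the invertible position--momentum coupling blocks (which are the $A$-blocks of the successive symplectic maps, invertible by the type V lemma), reducing $H^{(N)}$ to a normal form from which the nondegeneracy criterion, the stabilization in \textbf{(b)}, and the direct-sum splittings in the catenation step all drop out; carrying this through while honestly tracking the boundary terms of the discretization is the delicate part. (A less self-contained alternative would invoke the spectral-flow description of $\mu_{CZ}$ and recognize $H^{(N)}$ as a finite-difference approximation of the associated first-order self-adjoint operator, so that convergence of signatures follows from that; the outline above uses only the properties already quoted in Lemma~\ref{lemma:conley-zehnder-def}.)
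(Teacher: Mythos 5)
First, a point of comparison: the paper does not prove this statement at all --- it is quoted from McDuff--Salamon (Remark 9.18) and used as a black box, so there is no in-paper argument to measure you against. Your strategy (show the limiting signature exists and satisfies the three axioms of Lemma~\ref{lemma:conley-zehnder-def}, then invoke uniqueness) is the natural and essentially standard route, and the organizing idea of block Gaussian elimination against the invertible $A$-blocks is the right mechanism. But as written this is a plan, not a proof: every one of its load-bearing steps is asserted or sketched rather than carried out, and you say so yourself (``I claim'', ``I would analyze'', ``I expect the main obstacle to be step (b)'').

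Concretely: (i) Step \textbf{(b)}, stabilization under refinement, is the crux of well-definedness of $\nu$ and is only claimed; one must actually show that inserting a pair $(x',y')$ produces, after congruence, a $2n\times 2n$ block which for near-identity factors is a small perturbation of the hyperbolic pairing $-\langle y', x'\rangle$ (hence nondegenerate of signature $0$), and that eliminating it reproduces exactly the Hessian built from the composed generating function --- none of this is verified. (ii) The normalization check is deferred to a discrete Fourier computation ``up to boundary effects'', but the boundary is where the entire answer lives: the discrete action here is not periodic (free $x_0$, no $y_N$), so the matrix is Toeplitz-like rather than circulant, the bulk symbols contribute nothing, and the claimed count of ``finitely many resonant frequencies'' summing to $\tfrac12\,\tx{sign}(S)$ is precisely the nontrivial computation, not done. (There is also a parity warning sign: with the paper's variable convention the Hessian for $n=1$ has odd size, hence odd signature, so conventions must be pinned down before the normalization $\tfrac12\,\tx{sign}(S)$, which can be $0$, can even be stated correctly.) (iii) In the catenation step you identify the discretization of the pointwise product path $\phi\psi$ with the concatenation of the discretizations of $\phi$ and $\psi$; these are different paths, and the identification needs the standard homotopy $\phi\psi \simeq \phi * \psi$ together with the homotopy invariance you establish in (a)--(b). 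More seriously, the heart of that step --- showing that the degenerate $H^{(N)}_\phi$ coupled through the junction variables shifts the signature by exactly $2\mu(\phi)$, i.e.\ the Wall-nonadditivity bookkeeping you mention --- is exactly what needs proving and is only gestured at. Until (i)--(iii) are executed, the appeal to the uniqueness part of Lemma~\ref{lemma:conley-zehnder-def} has nothing to apply to, so the proposal does not yet establish the theorem.
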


\section{Random Walk on Lie Group}
\label{sec:random-walk}
Let $G$ be a connected Lie group, equipped with some canonical left-invariant Haar measure $\mu_{\tx{Haar}}$. Let $(\mu_n)_{n \ge 1}$	be a sequence of probability measures on $G$, and let $(X_{k, n})_{1 \le k \le n}$ be iid random variables with law $\mu_n$. Define $S_{j, n} = X_{1, n} \ldots X_{j, n}$, so they have law $\mu_n^{* j}$.
Define a random process $(X^n_t)_{0 \le t \le 1}$ by $X^n_t = S_{j, n}$ if $j \le \lfloor n t \rfloor < j + 1$. This will be used to approximate of a random path on $G$ as we take $n \to \infty$.

Fix a basis $(e_i)$ for the associated Lie algebra $\mf{g}$, and define continuous functions $x_i: U_0 \to \mb{R}$ for some neighborhood $U_0$ of identity, such that $x_i(\tx{exp}(\sum_j a_j e_j)) = a_i$. We extend $x_i$ to $G$ by requiring that $x_i = 1$ outside some compact set containing $U_0$. Then we have the following Lie-theoretic central limit theorem:

\begin{theorem}[Wehn's Theorem, as presented in \cite{stroock1973limit}]
	Suppose we have real numbers $b_j$, $a_{ij}$ such that:
	\begin{itemize}
	\item $\mu_n(G \setminus U) = o(n^{-1})$ for every neighborhood $U$ of identity in $G$.
	\item $\int x_i(g) d \mu_n(g) = \frac{b_i}{n} + o(n^{-1})$
	\item $\int x_i(g) x_j(g) d \mu_n(g) = \frac{a_{ij}}{n} + o(n^{-1})$
	\end{itemize}
	Then there exists a continuous semigroup of probability measures $(\nu_t)_{0 \le t \le 1}$ on $G$, specified by the infinitesimal generator $L \nu = \sum_i b_i e_i + \frac{1}{2} \sum_{ij} a_{ij} e_i e_j$ such that, if $(X_t)_{0 \le t \le 1}$ is the random process whose law is $(\nu_t)$, then $(X_t^n)$ converges weakly to $X_t$ as $n \to \infty$.
\end{theorem}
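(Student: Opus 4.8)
The plan is the standard two–step programme for proving a functional central limit theorem for a Markovian process: first establish that the laws of $(X^n_t)_{0\le t\le 1}$ form a tight family in the Skorokhod space $D([0,1],G)$, and then show that every subsequential weak limit solves the martingale problem for the left-invariant operator $L=\sum_i b_i e_i+\tfrac12\sum_{ij}a_{ij}e_ie_j$. Once the $L$-martingale problem is known to be well-posed, these two facts together force $(X^n_t)$ to converge weakly to the unique process with that generator, and that process is precisely the one governed by the convolution semigroup $(\nu_t)$; the existence of $(\nu_t)$ and the well-posedness of the martingale problem will both be imported from Hunt's theory of convolution semigroups on Lie groups, applied to a generator whose Lévy–Khintchine data has trivial jump part and covariance $(a_{ij})$.

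The heart of the argument is the martingale-problem computation. I would fix $f\in\mc{C}^\infty_c(G)$ and $g\in G$, use the exponential chart and the coordinate functions $x_i$ to write $h=\exp(\sum_i x_i(h)e_i)$ for $h$ near the identity, and Taylor-expand along the left translation:
\[ f(gh)=f(g)+\sum_i x_i(h)\,(e_i f)(g)+\tfrac12\sum_{i,j}x_i(h)x_j(h)\,(e_i e_j f)(g)+R(g,h), \]
where $e_i$ acts as the associated left-invariant vector field and $R(g,h)=O(|x(h)|^3)$ uniformly in $g$ on $\tx{supp}(f)$ (and $R$ stays bounded for $h$ away from identity, where the artificial cutoff on $x_i$ takes over). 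Integrating against $\mu_n$ and feeding in the three hypotheses — the first-moment condition produces $\tfrac1n\sum_i b_i(e_if)(g)+o(n^{-1})$, the second-moment condition produces $\tfrac1n\cdot\tfrac12\sum_{ij}a_{ij}(e_ie_jf)(g)+o(n^{-1})$, and the tail bound $\mu_n(G\setminus U)=o(n^{-1})$ combined with the second-moment control kills the remainder, $\int_G|R(g,h)|\,d\mu_n(h)=o(n^{-1})$ — gives
\[ \int_G f(gh)\,d\mu_n(h)-f(g)=\tfrac1n (Lf)(g)+o(n^{-1}), \]
uniformly in $g$. Because the increments $X_{k,n}$ are iid and act on the left, summing this telescoping identity over the $\lfloor nt\rfloor$ steps and conditioning on the natural filtration shows that
\[ M^n_t:=f(X^n_t)-\int_0^t (Lf)(X^n_s)\,ds \]
is a martingale up to an error that is $o(1)$ uniformly on $[0,1]$; in any subsequential limit, $f(X_t)-\int_0^t (Lf)(X_s)\,ds$ becomes a genuine martingale for every such $f$, i.e.\ the limit solves the $L$-martingale problem.

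For the tightness step I would apply the same Taylor estimate to a fixed function that agrees with $|x(\cdot)|^2$ near identity; the second-moment hypothesis then yields a bound of the form $\mb{E}[\mathrm{dist}(X^n_{t+s},X^n_t)^2\mid \mc{F}_t]\le Cs+o(1)$ in a fixed left-invariant metric, which via an Aldous–Rebolledo / Kolmogorov–Chentsov criterion for manifold-valued càdlàg paths gives tightness in $D([0,1],G)$; the tail hypothesis $\mu_n(G\setminus U)=o(n^{-1})$ simultaneously forces the largest jump of $X^n$ to vanish in probability, so that all subsequential limits in fact live in $C([0,1],G)$, matching the claimed limit.

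The step I expect to be the main obstacle is establishing uniqueness for the $L$-martingale problem when $L$ is degenerate: if the matrix $(a_{ij})$ fails to be positive definite — which is exactly the situation of interest for $\tx{Sp}(2n,\R)$, where only a handful of Lie-algebra directions are driven by noise — then $L$ is a genuine sub-elliptic operator, and uniqueness cannot be read off from elementary parabolic theory. The clean way around this, rather than proving a Hörmander-type estimate, is to invoke Hunt's theorem directly: a left-invariant second-order operator of the precise form $\sum_i b_i e_i+\tfrac12\sum_{ij}a_{ij}e_ie_j$ with $(a_{ij})\succeq 0$ generates a unique weakly continuous convolution semigroup of probability measures $(\nu_t)$ on $G$, and the associated left-Markov process is the unique solution of the martingale problem within the class of processes with continuous sample paths — so tightness plus the martingale-problem identification close the argument. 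A secondary nuisance to be handled carefully is that $x_i$ is a bona fide coordinate only on the neighborhood $U_0$ and has been forced to equal $1$ outside a compact set, so one must verify that this discontinuity never enters the estimates above; this is precisely what the hypothesis $\mu_n(G\setminus U)=o(n^{-1})$ guarantees, since it confines essentially all of the mass of $\mu_n$ to an arbitrarily small neighborhood of identity as $n\to\infty$.
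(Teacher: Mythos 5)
The paper offers no proof of this statement: Wehn's theorem is quoted as an external result from \cite{stroock1973limit}, so there is no internal argument to compare yours against. Judged on its own, your sketch follows what is essentially the route of the cited source itself: a one-step generator expansion showing $\int f(gh)\,d\mu_n(h)-f(g)=\tfrac1n(Lf)(g)+o(n^{-1})$ uniformly for $f\in\mc{C}^\infty_c(G)$, telescoping to an approximate martingale, tightness in the Skorokhod space, and identification of subsequential limits through a well-posed martingale problem for the left-invariant operator $L$. You also use the three hypotheses in exactly the right places; in particular the pairing of the tail condition $\mu_n(G\setminus U)=o(n^{-1})$ with the second-moment bound to kill the cubic remainder (and to neutralize the artificial cutoff of the $x_i$) is the correct mechanism.

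Two places would need to be firmed up in a full write-up. First, the tightness estimate $\mb{E}[\mathrm{dist}(X^n_{t+s},X^n_t)^2\mid\mc{F}_t]\le Cs+o(1)$ cannot come from a triangle-inequality count over the $\approx ns$ factors (that only gives order $ns^2$); it must come, as you gesture, from applying the one-step generator bound to a truncated squared-distance test function and telescoping, so that the drift and martingale parts each contribute order $s$ --- make that the explicit argument, and note that Aldous' criterion needs the same bound started from stopping times, which the Markov property supplies. Second, for degenerate $(a_{ij})$ Hunt's theorem by itself gives existence and uniqueness of the convolution semigroup $(\nu_t)$, but well-posedness of the $L$-martingale problem additionally requires knowing that $\mc{C}^\infty_c(G)$ (or another dense invariant subspace) is a core for the generator of that semigroup, after which the standard Ethier--Kurtz duality argument yields uniqueness of solutions; this core property is true for left-invariant operators of Hunt type but is not literally part of Hunt's statement, so it should be cited separately. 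With those two points made precise, your proposal is a complete and correct proof of the theorem as stated.
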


Note that $G$ being a connected Lie group is necessary for ensuring, for instance, that a continuous semigroup of probability measures with the specific infinitesimal generator exists on $G$.
Now we focus on the case $G = \tx{Sp}(2, \mb{R})$. Let $\tx{sp}(2, \mb{R})$ denote the associated Lie algebra, where we select the following basis:
\[e_1 = \pmtx{1 & 0 \\ 0 & -1}, e_2 = \pmtx{0 & 1 \\ 0 & 0}, e_3 = \pmtx{0 & 0 \\ 1 & 0}\]
Let $X$ be a Gaussian random variable in the 3-dimensional vector space $\tx{sp}(2, \mb{R})$, centered at $(0, 0, 0)$ with covariance matrix $\frac{c}{N} \tx{Id}$ for some parameters $c \in \mb{R}^+, N \in \mb{N}^+$. Then $\tx{exp}(X)$ is a  $\tx{Sp}(2, \mb{R})$-valued random variable. Let $X_1, \ldots, X_N$ be iid variables that have the law of $\tx{exp}(X)$, and define the process $(X_t^N)$ analogously as $(X_t^n)$ above. By Wehn's theorem:

\begin{corollary}
	As $N \to \infty$, $(X_t^N)$ converges weakly to $(X_t)$, a stochastic process associated with the infinitesimal generator $\frac{c}{2} \sum_{i = 1}^{3} e_i^2$.
	\label{corollary:lie-random-walk}
\end{corollary}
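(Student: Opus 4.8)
The plan is to verify directly the three hypotheses of Wehn's theorem for the sequence of probability measures $\mu_N$ on $G = \tx{Sp}(2,\mb{R})$ given as the pushforward of (the law of) the Gaussian $X$ under the globally defined, continuous matrix exponential $\exp\colon \tx{sp}(2,\mb{R}) \to G$; note that $G$ is connected, as the theorem demands, and that $(X_t^N)$ is built from $N$ iid copies of $\exp(X) \sim \mu_N$, exactly as in the theorem. Fix a neighborhood $V_0 \ni 0$ in $\tx{sp}(2,\mb{R})$ on which $\exp$ restricts to a diffeomorphism onto $U_0$, and pick a Euclidean ball $W \subseteq V_0$ centered at $0$. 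Then for $v \in W$ we have $x_i(\exp(v)) = v_i$, the $i$-th coordinate of $v$ in the basis $(e_i)$, while on all of $G$ the $x_i$ are bounded --- say $|x_i| \le C$ --- being continuous and eventually constant. Writing $X = \sum_{i=1}^{3} X_i e_i$, the vector $(X_1, X_2, X_3)$ is a centered Gaussian in $\mb{R}^3$ with $\mb{E}[X_i] = 0$ and $\mb{E}[X_i X_j] = \frac{c}{N}\delta_{ij}$, so the whole argument reduces to tail estimates for $X$.

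For the first hypothesis, given a neighborhood $U$ of the identity choose a ball $W' \ni 0$ of radius $\delta > 0$ with $\exp(W') \subseteq U$; then $\mu_N(G \setminus U) \le \mb{P}(X \notin W') \le \mb{P}(|X| \ge \delta)$, and since $X$ is distributed as $\sqrt{c/N}\,Z$ for a standard Gaussian $Z$ on $\mb{R}^3$, this is $\mb{P}(|Z| \ge \delta\sqrt{N/c})$, which decays faster than any power of $1/N$ and is in particular $o(N^{-1})$. For the remaining two hypotheses I would split each integral over $G$ as a contribution from $\exp(W)$ plus one from its complement. On $\exp(W)$ the $x_i$ are honest coordinates, so $\int_{\exp(W)} x_i\, d\mu_N = \mb{E}[X_i\,\mathbf{1}_{\{X \in W\}}]$ and $\int_{\exp(W)} x_i x_j\, d\mu_N = \mb{E}[X_i X_j\,\mathbf{1}_{\{X \in W\}}]$; the complementary pieces are bounded in absolute value by $C\,\mu_N(G\setminus\exp(W))$ and $C^2\,\mu_N(G\setminus\exp(W))$ respectively, which are $o(N^{-1})$ by the tail bound, and the truncated Gaussian moments $|\mb{E}[X_i\,\mathbf{1}_{\{X\notin W\}}]|$ and $|\mb{E}[X_i X_j\,\mathbf{1}_{\{X\notin W\}}]|$ are likewise $o(N^{-1})$ by Cauchy--Schwarz and the same tail bound. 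Therefore $\int x_i\, d\mu_N = \mb{E}[X_i] + o(N^{-1}) = o(N^{-1})$ and $\int x_i x_j\, d\mu_N = \mb{E}[X_i X_j] + o(N^{-1}) = \frac{c}{N}\delta_{ij} + o(N^{-1})$.

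Feeding this into Wehn's theorem with $b_i = 0$ and $a_{ij} = c\,\delta_{ij}$ yields that $(X_t^N)$ converges weakly to the process with infinitesimal generator $\sum_i b_i e_i + \frac{1}{2}\sum_{i,j} a_{ij}e_i e_j = \frac{c}{2}\sum_{i=1}^{3} e_i^2$, which is the assertion. I do not expect a genuine obstacle --- the result is essentially immediate from Wehn's theorem --- and the only delicate point is bookkeeping: the functions $x_i$ are true linear coordinates only on the small chart $U_0$, and $\exp$ is neither injective nor surjective on $\tx{sp}(2,\mb{R})$ globally, but this causes no trouble because the Gaussian mass concentrates so sharply around $0$ that the crude bound $|x_i| \le C$ times the super-polynomially small escape probability absorbs every term living outside the good region $\exp(W)$.
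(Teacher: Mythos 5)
Your proposal is correct and follows the same route as the paper: the corollary is obtained by applying Wehn's theorem to the pushforward of the Gaussian law under $\exp$, which is exactly what the paper does (it states the corollary as an immediate consequence, without spelling out the hypothesis checks). Your verification of the three hypotheses via the Gaussian tail bound and the truncation/Cauchy--Schwarz bookkeeping is sound and simply fills in the details the paper leaves implicit.
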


By a parallel with the real case, we refer to such process as the \ti{Brownian motion} of parameter $c$ on $G$. Notice that $(X_t)$ is a Levy process, and thus $(X_t)$ are almost surely continuous paths in $t$; we refer to them as \ti{Brownian paths}. Due to the particular form of the generator (c.f. \cite{breuillard2007random}), the probability measures $(\nu_t)$ are absolutely continuous with respect to the Haar measure of $G$. For the canonical choice of the Haar measure, $\tx{Sp}_0$ has measure zero, so it is almost always true that $X_1$ does not land on it. In other words, the random path $t \mapsto X_t$ almost always has a well-defined Conley-Zehnder index, and thus $(\nu_t)$ gives a distribution of the index. Now we can state our main result:

\begin{theorem}
	\label{thm:main}
	Let $M_k$ be the $k$-th moment of the said Conley-Zehnder index distribution. Then we have $M_{2i + 1} = 0$ and $M_{2i} = \Theta(c^{i})$ for all $i \in \mb{N}$.
\end{theorem}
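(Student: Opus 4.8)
The plan is to follow the route sketched in the introduction. Fix $N$ and realize the random path as the concatenation of the $\psi_j=\exp(X_j)$ with $X_j$ i.i.d.\ Gaussian on $\tx{sp}(2,\mb{R})$ of covariance $\tfrac cN\tx{Id}$. By Theorem~\ref{thm:morse-index}, for $N$ large the Conley--Zehnder index of this path equals the signature of the Hessian $d^2\Phi_N$ of the discrete action; ordering the variables $(x_0,y_0,x_1,y_1,\dots,y_{N-1},x_N)$ and inserting the explicit type-$V$ generating functions of the $\psi_j$, this Hessian is a $(2N+1)\times(2N+1)$ symmetric \ti{tridiagonal} matrix whose off-diagonal entries are $O(1)$ and whose diagonal entries are $O(N^{-1/2})$ (up to $1+O(N^{-1/2})$ factors they are the $B_j,C_j$ blocks of $\psi_j$, the first diagonal entry being $0$). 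The odd moments are settled at once: conjugation by $D=\tx{diag}(1,-1)$ is an automorphism of $\tx{Sp}(2,\mb{R})$ which is orthogonal on $\tx{sp}(2,\mb{R})$ (it fixes $e_1$ and negates $e_2,e_3$), hence preserves the law of the Brownian path; being anti-complex-linear for $J_0$ it replaces $\rho$ by $\bar\rho$, and it fixes both $W_+$ and $W_-$, so it negates $\mu_{CZ}$. Thus the index distribution is symmetric and $M_{2i+1}=0$.

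\textbf{Signature as a randomized Möbius dynamics.} For the even moments, compute $\tx{sign}(d^2\Phi_N)$ by the symmetric Jacobi recursion: with pivots $p_k=(\text{diag})_k-(\text{offdiag})_{k-1}^2/p_{k-1}$ one has $\tx{sign}(d^2\Phi_N)=\#\{p_k>0\}-\#\{p_k<0\}$, and each step is the action on $\mb{R}P^1=\mb{R}\cup\{\infty\}$ of a Möbius transformation built from the entries of $d^2\Phi_N$. With the scaling above, every such map is a \emph{fixed} order-two Möbius rotation $r$ of $\mb{R}P^1$ (explicitly $p\mapsto -1/(4p)$, which squares to the identity) composed with a random perturbation of size $O(N^{-1/2})$ depending linearly on the corresponding $X_j$. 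Each step therefore advances the angle on the circle $\mb{R}P^1$ by half a turn, so the pivots alternate in sign and $\tx{sign}(d^2\Phi_N)$ differs from $\pm 1$ only by $(\pm 2)$ times the net number of extra windings the noise produces; conjugating away the deterministic half-turns, the process $q_k=r^{-k}\cdot p_k$ is a random walk on $\mb{R}P^1$ with mean-zero $O(N^{-1/2})$ increments, and $\tx{sign}(d^2\Phi_N)$ equals an a.s.\ bounded quantity plus a fixed nonzero multiple of the winding number of $(q_k)$ around $\mb{R}P^1$.

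\textbf{The continuous limit and the heat-kernel estimate.} As $N\to\infty$ the walk $(q_k)$ converges weakly to a diffusion $(q_t)_{0\le t\le 1}$ on $\mb{R}P^1$, whose generator is the image under the (Möbius) $\tx{Sp}(2,\mb{R})$-action of a Brownian generator on $\tx{PSL}(2,\mb{R})$. A short but essential computation shows the a priori $O(c)$ drift vanishes: in the two-step Möbius product $-h_j$ (which lies near $I$) the quadratic part of $\log(-h_j)$ is a multiple of the identity, hence $0$ since $\log(-h_j)$ is traceless, so the two-step increment is mean-zero to leading order and $N\,\mb{E}[\log(-h_j)]\to 0$; moreover the (skewed) second-order part is conjugate by $\tx{Ad}_{\tx{diag}(\sqrt2,1/\sqrt2)}$ to the standard one. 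Hence $(q_t)$ is, in its arclength coordinate, driftless Brownian motion of diffusivity $\propto c$ on a circle $\mb{R}/L\mb{Z}$ of a fixed circumference $L>0$ (equivalently it is the line process $\phi_t\cdot\ell_0$ under the symplectic action, with generator $\tfrac c2(\sqrt{g}\,\partial_x)^2$, $g(x)=1+4x^2+x^4$). By the previous steps the Conley--Zehnder index is, up to an a.s.\ bounded error, a fixed nonzero multiple of the winding number $W$ of $(q_t)$ over $[0,1]$. The law of $W$ comes from the transition density of $(q_t)$ lifted to the universal cover $\mb{R}$, which solves a uniformly parabolic equation with periodic coefficients; Gaussian (Aronson-type) two-sided bounds $p_1(x,y)\asymp c^{-1/2}\exp(-|x-y|^2/O(c))$ give $P(W=m)\asymp c^{-1/2}\exp(-(mL)^2/O(c))$, whence $\mb{E}[W^{2k}]=\sum_m m^{2k}P(W=m)=\Theta(c^{k})$ as $c\to\infty$ (upper bound by summing Gaussian tails; lower bound from $\tx{Var}(W)=\Theta(c)$ and $\mb{E}[W^{2k}]\ge(\mb{E}[W^2])^k$). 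A uniform-in-$N$ bound $\sup_N\mb{E}\!\left[\tx{sign}(d^2\Phi_N)^{2k}\right]<\infty$, from the same pivot estimates, provides the uniform integrability needed to pass the moments through the $N\to\infty$ limit, giving $M_{2k}=\mb{E}[\mu_{CZ}^{2k}]=\Theta(c^k)$.

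\textbf{Main obstacle.} The delicate point is coordinating the two $N\to\infty$ limits --- Theorem~\ref{thm:morse-index}'s pathwise convergence of signature to index, and the invariance-principle convergence of the pivot walk to the $\mb{R}P^1$-diffusion --- and making the latter quantitative enough to control the occupation/winding functional that produces the signature, while the pivots pass arbitrarily close to $0$ and $\infty$ (exactly at the sign changes). Establishing the uniform-in-$N$ integrability of the signatures, carrying out the drift-cancellation check cleanly, and proving the two-sided heat-kernel bound sharply enough to land the matching lower bound $\mb{E}[W^{2k}]=\Omega(c^k)$ (the $O(c^k)$ upper bound being comparatively soft) are where the real work lies. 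One caveat: since for small $c$ the index concentrates on $\{-1,0,1\}$ with $\pm1$ of positive probability, so $M_{2i}$ tends to a positive constant as $c\to 0$, the statement $M_{2i}=\Theta(c^i)$ should be read as the $c\to\infty$ asymptotic, in line with the abstract's ``moment asymptotics.''
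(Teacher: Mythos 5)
Your proposal is correct in outline and runs through the same pipeline as the paper (Theorem~\ref{thm:morse-index}, a random tridiagonal Hessian, a M\"obius-type recursion on the projective line, a diffusion limit on the circle, Gaussian-type bounds on its transition density), but several of your ingredients are genuinely different, and the comparison is worth recording. For the odd moments you use invariance of the path law under conjugation by $\tx{diag}(1,-1)$, which fixes $W_\pm$, preserves the Gaussian law on $\tx{sp}(2,\mb{R})$, and negates $\mu_{CZ}$; the paper instead deduces symmetry only at the end, from the fact that $p(t,-x)$ solves the same Fokker--Planck equation. Your argument is cleaner and works before any limit is taken. For the signature you run the Sturm/LDL pivot recursion at spectral parameter $0$ and count sign alternations; the paper instead evolves eigenvector ratios with a spectral parameter $L$, proves monotonicity of the lifted angle in $L$, and computes the $L\to\infty$ end explicitly, so that only $L=0$ survives --- your route reaches essentially the same $L=0$ recursion more directly (Sylvester in place of the monotonicity and $L\to\infty$ lemmas), though you still owe the bookkeeping when pivots pass through $0$ and $\infty$, which the paper sidesteps by compactifying to $S^1$ from the start. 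The most interesting divergence is the endgame: the paper keeps the explicit angular SDE with drift $\frac{7c}{8}\sin(2\theta)$ and diffusion $\frac{c}{4}(11-7\cos 2\theta)$ and invokes the Norris--Stroock/Kohatsu-Higa two-sided heat-kernel estimate, whereas you observe that the group-level drift of the two-step increments vanishes (this is true: the scalar quadratic term of $\mb{E}[\log]$ must vanish by tracelessness), so the limiting circle process is intrinsic Brownian motion, literally driftless of diffusivity $\propto c$ in the arclength coordinate, and the winding is exactly Gaussian on the cover; this is consistent with the paper's coefficients, since $\frac{7c}{8}\sin 2\theta=\frac14\frac{d}{d\theta}\bigl(\frac{c}{4}(11-7\cos 2\theta)\bigr)$, i.e.\ the paper's generator is exactly of the form $\frac{c}{2}(\sigma\partial_\theta)^2$, and it buys you exact Gaussian densities in place of Aronson bounds. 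Two of your auxiliary claims there are off but inessential: the two-step covariance is proportional to $\tx{diag}(1,\tfrac14,\tfrac14)$ in the basis $e_1,e_2,e_3$ and cannot be made isotropic by $\tx{Ad}_{\tx{diag}(\sqrt2,1/\sqrt2)}$ (and your explicit $g(x)=1+4x^2+x^4$ uses the wrong covariance), but neither is needed, because any sum-of-squares group generator projects to a generator of the form $\frac12(\sigma\partial)^2$ on $\mb{R}P^1$. Note also that the paper deliberately replaces $\exp(\tx{Gaussian})$ by the explicit matrix law of Section~\ref{sec:random-matrix} to make these Taylor computations tractable; your acknowledged remaining gaps (uniform integrability when exchanging moments with the $N\to\infty$ limit, the quantitative coupling of the two limits) are likewise left implicit in the paper, and your reading of $\Theta(c^i)$ as a $c\to\infty$ asymptotic is the intended one.
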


Notice that this is analogous to the situation of simple random walk on $\mb{R}$: if we let $X_1, \ldots, X_N$ be iid normal variables with expectation $0$ and variance $\frac{c}{N}$, and let $S_j = X_1 + \ldots + X_j$, then the odd moments of $S_N$ vanish and the $(2k)$-th moment of $S_N$ is $c^k$. In the last section, we saw that the Conley-Zehnder index can be computed by first projecting to a path on the unit circle then computing its degree, so the ``spirit'' of the statement above is that Brownian motion on $\tx{Sp}(2, \mb{R})$ ``translates down'' to that on $S^1$.

\begin{remark}
	Numerical simulations show that the same holds for general $\tx{Sp}(2n, \mb{R})$. In the next version of the paper we plan to upgrade our argument to account for $n > 1$. See Appendix~\ref{app:simulation} for more details.
\end{remark}

\section{Translation to Random Matrix}
\label{sec:random-matrix}
To make the computation easier, we replace distribution $\tx{exp}(X)$ with a simpler distribution; for the motivation of this choice, see the end of this section. Let $X, Y, Z$ be Gaussian variables in $\mb{R}$ centered around $0$ with variance $\frac{c}{N}$. The following expression induces a probability distribution on $\tx{Sp}(2, \mb{R})$, where we set the probability to be zero for any Borel set contained in the complement of the image:
\[\pmtx{e^X & e^X Y \\ e^X Z & e^{-X} + e^X Y Z}\]
Let $Y_1, \ldots, Y_N$ be iid variables following such distribution, and define the discrete random walk $(Y^N_t)$ analogously. First we want to say that this is sampling from a neighborhood of the identity.

\begin{lemma}
	As $N \to \infty$, it is almost always true that $Y_1, \ldots, Y_N \in \im(\exp)$.
\end{lemma}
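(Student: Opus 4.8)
The plan is to combine the classical description of the image of the exponential map on $\tx{Sp}(2, \R) = \tx{SL}(2, \R)$ with a crude Gaussian large-deviation bound. First, a one-line computation confirms that the displayed matrix lies in $\tx{SL}(2, \R)$: its determinant is $e^X(e^{-X} + e^X YZ) - (e^X Y)(e^X Z) = 1$. Recall next the standard fact that $A \in \tx{SL}(2, \R)$ lies in $\im(\exp)$ if and only if $\tx{tr}(A) > -2$ or $A = -I$; this can be cited, or reproved in a few lines by diagonalizing elements of $\mf{sl}(2, \R)$ over $\C$ and reading off the eigenvalues of their exponentials (elements with real nonzero eigenvalues $\pm\mu$ give a reciprocal pair $e^{\pm\mu}$, hence $\tx{tr} > 2$; nontrivial nilpotents give unipotents with $\tx{tr} = 2$; elements with imaginary eigenvalues $\pm i\theta$ give rotations with $\tx{tr} = 2\cos\theta \in [-2,2]$, equal to $-2$ only when the exponential is $-I$). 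In our situation the top-left entry $e^X$ of each $Y_i$ is always positive, so $Y_i \neq -I$ with certainty; hence $Y_i \notin \im(\exp)$ if and only if $\tx{tr}(Y_i) \le -2$, and it suffices to control the probability of this event.

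Writing the three defining Gaussians as $X, Y, Z \sim \mc{N}(0, c/N)$, we have $\tx{tr}(Y_i) = e^X + e^{-X} + e^X YZ = 2\cosh X + e^X YZ \ge 2 + e^X YZ$, so $\{\tx{tr}(Y_i) \le -2\} \subseteq \{e^X YZ \le -4\} \subseteq \{e^X |YZ| \ge 4\}$. This is a large-deviation event, since $X, Y, Z$ are each of typical size $O(N^{-1/2})$ and so $e^X|YZ|$ is typically of size $O(N^{-1})$. To make this quantitative, note that on $\{X \le 1\}$ we have $e^X \le e < 4$, so $e^X|YZ| \ge 4$ forces $|YZ| > 1$ and hence $\max(|Y|, |Z|) > 1$; therefore
\[\{e^X |YZ| \ge 4\} \ \subseteq\ \{X > 1\} \cup \{|Y| > 1\} \cup \{|Z| > 1\}.\]
Each event on the right is the tail of a centered Gaussian of variance $c/N$, so by the standard subgaussian bound each has probability at most $2 e^{-N/(2c)}$, giving $\mb{P}(\tx{tr}(Y_i) \le -2) \le 6\, e^{-N/(2c)}$.

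A union bound over $i = 1, \dots, N$ then yields
\[\mb{P}\big(\exists\, i \le N :\ Y_i \notin \im(\exp)\big) \ \le\ 6 N\, e^{-N/(2c)}\ \xrightarrow{\ N \to \infty\ }\ 0,\]
which is the claim (and since this bound is summable in $N$, Borel--Cantelli upgrades it to an almost-sure statement along the sequence if that is the intended reading). I do not anticipate a genuine obstacle here: the only slightly delicate point is the exact shape of $\im(\exp)$ in $\tx{SL}(2, \R)$ — in particular that $-I$ belongs to the image even though the other trace-$(-2)$ elements do not — and even this is moot, since $Y_i$ can never equal $-I$; everything else is elementary Gaussian concentration together with a union bound.
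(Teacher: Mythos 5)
Your proof is correct and follows essentially the same route as the paper: both use the criterion that an element of $\tx{SL}(2,\R)$ lies in $\im(\exp)$ iff its trace exceeds $-2$ (or it equals $-I$), bound the trace via $e^X+e^{-X}\ge 2$, and reduce the bad event to Gaussian tails of $X,Y,Z$ at order-one thresholds. The only cosmetic difference is that the paper bounds the probability that all $N$ samples are good by a product of error functions, while you use a subgaussian tail estimate plus a union bound (and you additionally note $Y_i\neq -I$, which is harmless either way).
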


\begin{proof}
	Recall that the condition for being in the image of the exponential map is that $\tx{tr} > -2$ or being equal to $-I$. The former is guaranteed if we have both $e^X < 2 \Leftrightarrow X < \log(2)$ and $Y Z > -2$, which is in turn guaranteed if we have $|X| < \log(2), |Y|, |Z| < \sqrt{2}$. The chance of these being maintained during all $N$ rounds is $\left(\text{erf}\left(\frac{1}{\sqrt{\frac{c}{N}}}\right)^2 \text{erf}\left(\frac{\log (2)}{\sqrt{2} \sqrt{\frac{c}{N}}}\right)\right)^N \ge \text{erf}\left(\frac{\sqrt{N}}{\sqrt{8 c}}\right)^{3 N} = \Theta(\tx{erf}(\sqrt{N}))^{3N} \to 1$ as $N \to \infty$.
\end{proof}

\begin{lemma}
	As $N \to \infty$, $(Y^N_t)$ also converges to $(X_t)$ for the same process as in Corollary~\ref{corollary:lie-random-walk}.
\end{lemma}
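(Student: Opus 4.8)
The plan is to apply Wehn's Theorem directly, exactly as was done to obtain Corollary~\ref{corollary:lie-random-walk}, but now with the new distribution. Concretely, writing $g = g(X, Y, Z)$ for the matrix in the displayed parametrization, I need to verify the three hypotheses of Wehn's Theorem for $\mu_N = \tx{law}(g)$ and check that the resulting generator is $\frac{c}{2}\sum_i e_i^2$. The first hypothesis (concentration near the identity) follows from a tail estimate entirely analogous to the previous lemma: since $X, Y, Z$ are centered Gaussians with variance $c/N$, the probability that $g$ escapes any fixed neighborhood $U$ of $I$ is bounded by a Gaussian tail that is $o(n^{-1})$, because $g \to I$ as $(X,Y,Z) \to 0$ and the escape probability decays like $\exp(-\Theta(N))$.

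The substance is in the moment computations, i.e. the second and third hypotheses. First I would compute the coordinate functions $x_i(g)$ in terms of $X, Y, Z$ near the identity. Since $e_1, e_2, e_3$ are as chosen and $\exp(a_1 e_1 + a_2 e_2 + a_3 e_3)$ can be written out explicitly (it is $\tx{SL}(2)$, so one has closed forms in terms of $\sinh, \cosh$ of $\sqrt{a_1^2 + a_2 a_3}$), I can invert to second order: one finds $x_1(g) = X + O(|(X,Y,Z)|^2)$, $x_2(g) = Y + O(\cdots)$, $x_3(g) = Z + O(\cdots)$, where the quadratic corrections are explicit polynomials in $X, Y, Z$. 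The point of choosing this particular parametrization (rather than $\exp(X)$ for $X$ Gaussian on $\mf{g}$) is precisely that $g(X,Y,Z)$ is an elementary rational expression, so these Taylor expansions are routine rather than involving matrix exponentials of Gaussian matrices. Then $\int x_i \, d\mu_N = \mb{E}[x_i(g)]$: the linear term contributes $0$ since $X, Y, Z$ are centered, and the quadratic corrections contribute $O(1/N) \cdot$(something)$ = O(1/N)$, but I need them to be $o(1/N)$ or to identify them as contributing to $b_i$. Here one checks that the quadratic forms in $(X,Y,Z)$ appearing have expectation exactly of order $1/N$ with a computable constant; matching against $\frac{b_i}{n} + o(n^{-1})$ forces $b_i$ to be that constant. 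A direct computation should show $b_i = 0$ for all $i$ (by symmetry/parity of the Gaussian and the structure of the correction terms — each quadratic correction should be an odd-degree-in-total or mixed term whose expectation vanishes, or its expectation is absorbed). Similarly $\int x_i x_j \, d\mu_N = \mb{E}[x_i(g) x_j(g)] = \mb{E}[(\text{linear})_i (\text{linear})_j] + o(1/N) = \frac{c}{N}\delta_{ij} + o(1/N)$, giving $a_{ij} = c\,\delta_{ij}$. Hence the generator is $\sum_i 0 \cdot e_i + \frac{1}{2}\sum_{ij} c\,\delta_{ij} e_i e_j = \frac{c}{2}\sum_i e_i^2$, matching Corollary~\ref{corollary:lie-random-walk}, and weak convergence of $(Y_t^N)$ to $(X_t)$ follows.

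The main obstacle, and the only place requiring genuine care, is the error-term bookkeeping in the moment expansions: I must be sure that the cubic-and-higher terms in the Taylor expansion of $x_i(g)$ in $(X,Y,Z)$ contribute only $o(1/N)$ to $\mb{E}[x_i(g)]$ and $\mb{E}[x_i(g)x_j(g)]$, rather than a spurious $O(1/N)$ contribution that would alter $b_i$ or $a_{ij}$. This is where the Gaussian tail bound from the first lemma is reused: on the bulk event $\{|X|,|Y|,|Z| < \delta\}$ the Taylor remainder is controlled polynomially, so its contribution to the moments is $O((c/N)^{3/2}) = o(1/N)$ for the cubic remainder in a single variable, and the complementary event has probability $o(1/N)$ times bounded integrand (using that $x_i$ is bounded outside a compact set by construction). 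Assembling these estimates carefully — keeping track of which monomials in $(X,Y,Z)$ have which order of expectation under a variance-$c/N$ Gaussian, where $\mb{E}[X^2] = c/N$, $\mb{E}[X^4] = 3(c/N)^2$, etc. — is the crux; everything else is a direct invocation of Wehn's Theorem with the same target generator already computed in Corollary~\ref{corollary:lie-random-walk}.
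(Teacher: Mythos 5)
Your proposal is correct and follows essentially the same route as the paper: the paper likewise verifies Wehn's hypotheses by Taylor-expanding the logarithm of the sampled matrix (equivalently, the canonical coordinates $x_i$) in $X, Y, Z$, finding $\mb{E}(a_i) = o(N^{-1})$, $\mb{E}(a_i a_j) = \frac{c}{N}\delta_{ij} + o(N^{-1})$, and then invoking Wehn's theorem with generator $\frac{c}{2}\sum_i e_i^2$. Your extra care about the tail event and the cubic-and-higher remainder terms is sound (the quadratic corrections are the mixed terms $-YZ/2$, $XY$, $XZ$, whose expectations vanish), and only makes explicit what the paper leaves implicit.
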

\begin{proof}
	Let $A = \pmtx{e^X & e^X Y \\ e^X Z & e^{-X} + e^X Y Z}$. Directly Taylor expand $\tx{log}(A) = (A - I) - \frac{(A - I)^2}{2} + \frac{(A - I)^3}{3} + \ldots$ and term-by-term Taylor expand on $X, Y, Z$, we get that, if we write $\log(A) = \sum_{j} a_i e_i$, then $\mb{E}(a_i) = 0 + o(N^{-1})$, $\mb{E}(a_i^2) = \frac{c}{N} + o(N^{-1})$ and $\mb{E}(a_i a_j) = 0$ for $i \neq j$. Now invoke Wehn's theorem.
\end{proof}

By Wehn's theorem, to sample a random path $(X_t)$ and compute its corresponding discrete isotopies is equivalent to directly sampling $N$ matrices with the said distribution, as $N \to \infty$. Suppose these matrices are $Y_i = \pmtx{p_i & q_i \\ r_i & s_i}$ for $i \in [0, N - 1]$, and that they yield generating functions $V_i(x, y) = a_i x^2 + b_i xy + d_i y^2$, where $a_i = \frac{-r_i}{2p_i}$, $b_i = 1 - p_i^{-1}$ and $d_i = \frac{q_i}{2p_i}$. Let us assume that their product does not land on $\tx{Sp}_0$, which is almost always the case. Then we can directly write out $d^2 \Phi(z)$ as $\pmtx{A & B \\ B^t & D}$, where
\[A = 
\pmtx{0 \\
		& -a_0 \\
		& & -a_1 \\
		& & & \ldots \\
		& & & & -a_{N - 1}
}
\hspace{1em}
B =
\pmtx{-1 \\
	1 - b_0 & -1 \\
	& 1 - b_1 & -1 \\
	& & \ldots & \ldots \\
	& & & 1 - b_{N - 2} & -1 \\
	& & & & 1 - b_{N - 1}
	}
\]
\[D = 
\pmtx{-d_0 \\
		& -d_1 \\
		& & -d_2 \\
		& & & \ldots \\
		& & & & -d_{N - 1}
}\]
Let us first turn it into a tridiagonal form: note that if $(L, (x_0, \ldots, x_N, y_0 \ldots, y_{N-1}))$ is an eigenpair of the matrix, then $(L, (x_0, y_0, \ldots, x_{N - 1}, y_{N - 1}, x_N, y_N = 0))$ is an eigenpair of the following matrix (this introduces a zero eigenvalue with eigenvector $(0, \ldots, 0, 1)$, which doesn't affect the signature and which we won't look at anyways):

\[\pmtx{
0 & -1 & \\
-1 & -d_0 & 1 - b_0 \\
   & 1 - b_0 & -a_0 & -1 \\
   &  & -1 & -d_1 & 1 - b_1 \\
   &  &    & \ldots & \ldots & \ldots \\
   &  &  &  & -1 & -d_{N - 1} &  1 - b_{N - 1} \\
   &  &  &  & & 1 - b_{N - 1} & -a_{N - 1} & -1 \\
   &  &  &  & &  & 0 & 0 \\
}\]

Do the conjugation $A \mapsto K^{-1} A K$, $K = \tx{diag}(k_1, \ldots, k_{2N+2})$ where $k_{2j-1} / k_{2j} = 1$, $k_{2j} / k_{2j + 1} = \frac{-1}{(1 - b_{j - 1})}$, we get the following matrix:

\[\pmtx{
0 & -1 & \\
-1 & -d_0 & -(1 - b_0)^2 \\
   & -1 & -a_0 & -1 \\
   &  & -1 & -d_1 & -(1 - b_1)^2 \\
   &  &    & \ldots & \ldots & \ldots \\
   &  &  &  & -1 & -d_{N - 1} &  -(1 - b_{N - 1})^2 \\
   &  &  &  & & -1 & -a_{N - 1} & -1 \\
   &  &  &  & &  & 0 & 0 \\
}\]
Now if we plug in the $a_i, b_i, d_i$ using the alternative distribution defined above, we get the following:

\[\pmtx{
0 & -1 & \\
-1 & -Y_0/2 & -e^{-2 X_0} \\
   & -1 & Z_0/2 & -1 \\
   &  & -1 & -Y_1/2 & -e^{-2 X_1} \\
   &  &    & \ldots & \ldots & \ldots \\
   &  &  &  & -1 & -Y_{N - 1}/2 &  -e^{-2 X_{N - 1}} \\
   &  &  &  & & -1 & Z_{N - 1}/2 & -1 \\
   &  &  &  & &  & 0 & 0 \\
}\]
where $X_i, Y_i, Z_i$ are Gaussian variables of expectation $0$ and variance $\frac{c}{N}$. This particularly simple form is what we aimed at when choosing the alternative distribution---if we had used $\tx{exp}(X)$ the expressions would be much messier. Thus it suffices to understand the signature of this random matrix.

\section{Translation to SDE}
\label{sec:random-sde}
In \cite{valko2009continuum}, the authors studied eigenvalue distributions of a tridiagonal matrix by converting it into a problem of studying the evolution of a single point under random Mobius transformations, which in turn was described by a stochastic differential equation as the continuous limit. We follow this idea to study our current problem.

That $(L, (x_0, y_0, \ldots, x_N, y_N = 0))$ is an eigenpair means that, for $0 \le i < N$:
\[L x_0 = - y_0\]
\[L x_{i + 1} = -y_i - y_{i + 1} +(Z_{i}/2) x_{i+1}\]
\[L y_i = - x_i - (Y_i/2) y_i - e^{-2 X_i} x_{i + 1}\]
\[L y_N = 0\]
Introduce $r_i = y_i / x_i, s_i = x_{i + 1} / y_i \in \overline{\mb{R}}$, where $\overline{\mb{R}}$ denotes the compactified real line. (Notice that eigenvector being nonzero forbids the occurrence of $0 / 0$.) Then we get:
\[r_0 = -L\]
\[L = Z_{i - 1}/2 - s_{i - 1}^{-1} - r_i\]
\[L = -Y_i/2 - r_i^{-1} - e^{-2 X_i} s_i\]
\[r_N = 0\]
which gives the following evolution of $r_i$:
\[r_{i + 1} = \frac{(2 e^{-2X} - \frac{1}{2}(2L+Y)(2L-Z)) r_i - (2L-Z)}{(2L+Y) r_i + 2}\]
Here we suppress---and will do so from now on---the indices of the random variables, with the understanding that each step a new set of random variables is being used. Then the study turns into studying for which $L$ do we have the boundary conditions (i.e. $r_0 = -L, r_N = 0$) met.
Let $\tilde{f}_L(u) = \tilde{f}(L, u)$ denote the single-step evolution:
\[u \mapsto \frac{(2 e^{-2X} - \frac{1}{2}(2L+Y)(2L-Z)) u - (2L-Z)}{(2L+Y) u + 2}\]
This is a Mobius transformation which is a perturbed version of the following transformation $f_L(u) = f(L, u)$:
\[u \mapsto \frac{(1 - L^2) u - L}{L u + 1}\]

Since we're interested in the signature it suffices to count the number of positive eigenvalues, so for what follows we assume $L \ge 0$. Let $\mc{J}: \overline{\mb{R}} \to S^1 \subset \mb{C}$ be the map from the extended real to the unit circle in the complex plane, given by $x \mapsto \frac{i - x}{i + x}$, so the inverse is given by $\mc{J}^{-1}(e^{i \theta}) = \tan(\theta / 2)$. Let $\mc{R}$ denote the universal covering of $S^1 \cong \overline{\mb{R}}$, where we fix a lift of $0$ to the basepoint of $\mc{R}$. Given any Mobius transformation $f$, $\mc{J} f \mc{J}^{-1}$ can be lifted to a continuous function $\mc{R} \to \mc{R}$, which we'll denote by $f^\circ$. In particular, consider $\tilde{f}_L^\circ$ for $L \ge 0$ and its $k$th iterations $(\tilde{f}_L^\circ)^{(k)}$, each time with a different set of random variables. Define a continuous function $\mc{F}(k, L): \mb{N} \times [0, \infty] \to \mc{R}$ as $(\tilde{f}_L^\circ)^{(k)}(\mc{J}(-L))$, then counting how many positive eigenvalues there are is the same as counting how many times $\mc{F}(N-1, L)$ crosses the $(2 j \pi)_{j \in \mb{Z}}$ lines for $L \in [0, \infty]$. Thus by counting these crossings we can compute the signature, which stabilizes as we take $N \to \infty$. The following shows that it suffices to look at the two ends: $L = 0$ and $L \to \infty$.

\begin{lemma}
	$\mc{F}(k, L)$ monotonically decreases in $L$ for any $k$ and values of random variables.
\end{lemma}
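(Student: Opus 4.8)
The plan is an induction on $k$ resting on three facts: \textbf{(a)} the base point $\mc{J}(-L)$, lifted to $\mc{R}$, is non-increasing in $L$; \textbf{(b)} each $\tilde f_L^\circ\colon\mc{R}\to\mc{R}$ is a (strictly) increasing homeomorphism; \textbf{(c)} for every fixed $v\in\mc{R}$ and every real value of the random variables $X,Y,Z$, the map $L\mapsto\tilde f_L^\circ(v)$ is (strictly) decreasing. Granting these, suppose $L_1<L_2$ and $\mc{F}(k,L_1)\ge\mc{F}(k,L_2)$; running the $(k{+}1)$-st step with the same sampled variables for both parameter values, fact \textbf{(b)} gives $\tilde f_{L_1}^\circ(\mc{F}(k,L_1))\ge\tilde f_{L_1}^\circ(\mc{F}(k,L_2))$ and fact \textbf{(c)} gives $\tilde f_{L_1}^\circ(\mc{F}(k,L_2))\ge\tilde f_{L_2}^\circ(\mc{F}(k,L_2))$, hence $\mc{F}(k{+}1,L_1)\ge\mc{F}(k{+}1,L_2)$; since $\mc{F}(0,L)=\mc{J}(-L)$, the base case $k=0$ is exactly \textbf{(a)}. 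The endpoint $L=\infty$ is then included by continuity of $\mc{F}(k,\cdot)$.

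Fact \textbf{(a)} is immediate: $\mc{J}(-L)=e^{-2i\arctan L}$, so with the fixed lift of $0$ to the basepoint the lift of $\mc{J}(-L)$ is $-2\arctan L$, which decreases from $0$ to $-\pi$ on $[0,\infty]$. For \textbf{(b)}, note $\tilde f_L$ is the Möbius transformation with matrix $M_L=\left(\begin{smallmatrix} 2e^{-2X}-\frac12(2L+Y)(2L-Z) & -(2L-Z)\\ 2L+Y & 2\end{smallmatrix}\right)$, and $\det M_L=4e^{-2X}>0$ for all real $X$; hence $\tilde f_L$ preserves the cyclic orientation of $\overline{\mb{R}}$, so $\mc{J}\tilde f_L\mc{J}^{-1}$ is an orientation-preserving circle homeomorphism and its lift $\tilde f_L^\circ$ is strictly increasing.

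The substance is \textbf{(c)}, which I would obtain by showing $\partial_L\tilde f_L^\circ(v)<0$ for every $v$. Differentiating the Möbius action, $\partial_L\tilde f_L$ is the vector field on $\overline{\mb{R}}$ generated by $\xi_L:=(\partial_L M_L)M_L^{-1}\in\mf{sl}_2(\mb{R})$, evaluated along $\tilde f_L$; so $\partial_L\tilde f_L^\circ(v)$ equals the coefficient of this vector field at the circle point $\tilde f_L^\circ(v)$. Passing to the angular coordinate $\theta$ with $u=\tan(\theta/2)$, the element $\left(\begin{smallmatrix} a & b\\ c & -a\end{smallmatrix}\right)$ of $\mf{sl}_2(\mb{R})$ becomes $(\alpha+\beta\cos\theta+\gamma\sin\theta)\,\partial_\theta$ with $\alpha=b-c$, $\beta=b+c$, $\gamma=2a$. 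A short computation gives $\xi_L=\frac{1}{2e^{-2X}}\left(\begin{smallmatrix} -(2L-Z) & -\frac12(2L-Z)^2-2e^{-2X}\\ 2 & 2L-Z\end{smallmatrix}\right)$; thus $\alpha_L=b-c<0$ since here $b<0$ and $c=e^{2X}>0$, while $\alpha_L^2-\beta_L^2-\gamma_L^2=-4bc-4a^2=4\det\xi_L=4\,\det(\partial_L M_L)/\det M_L=4\cdot 4/(4e^{-2X})=4e^{2X}>0$. Hence $\alpha_L+\beta_L\cos\psi+\gamma_L\sin\psi\le\alpha_L+\sqrt{\beta_L^2+\gamma_L^2}<0$ for all $\psi$, i.e. $\partial_L\tilde f_L^\circ(v)<0$, which is \textbf{(c)}.

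The only genuine obstacle is isolating \textbf{(c)} and recognizing that the clean way to see it is the statement that the $L$-velocity, viewed as a Möbius vector field on the circle, points in the negative direction \emph{at every point} — equivalently $\alpha_L<0$ together with the strict gap $\alpha_L^2>\beta_L^2+\gamma_L^2$. This strict inequality ($=4e^{2X}$) is a genuine feature of this particular family, and is precisely what makes the induction propagate; everything else is lift-bookkeeping and the one matrix computation above. One should also confirm that $\tilde f_L^\circ$ is chosen consistently in $L$ so that $\mc{F}(k,\cdot)$ is continuous (needed for the $L=\infty$ case), but this is part of the setup already in place.
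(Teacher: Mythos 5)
Your proposal is correct, and its skeleton --- propagate monotonicity in $L$ through the iteration, using that each lifted map is increasing and that the lifted base point $\mc{J}(-L)=-2\arctan L$ decreases --- is the same as the paper's. The genuine difference lies in the key step. The paper differentiates $\tilde f_L^\circ(u(L))$ in $L$ and reduces everything to the positivity of the spatial derivative $\tilde f_L'(x)$ (your fact (b), which the paper gets from the alternative form of $\tilde f_L$ rather than from $\det M_L=4e^{-2X}>0$); as displayed, its chain rule actually omits the term coming from the explicit $L$-dependence of $\tilde f_L$, which is precisely your fact (c). You supply that piece by computing the generator $\xi_L=(\partial_L M_L)M_L^{-1}$ and showing the induced circle vector field $\alpha_L+\beta_L\cos\theta+\gamma_L\sin\theta$ is strictly negative everywhere, since $\alpha_L<0$ and $\alpha_L^2-\beta_L^2-\gamma_L^2=4\det\xi_L=4e^{2X}>0$; I checked the matrix algebra, the angular-coordinate dictionary (consistent with the paper's convention $\mc{J}^{-1}(e^{i\theta})=\tan(\theta/2)$), and the sign bookkeeping, and they are right. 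Your route buys an argument valid uniformly at every point of $\overline{\mb{R}}$ (including the pole of $\tilde f_L$ and $x=\infty$, where a naive coordinate computation needs separate care), at the cost of the $\mf{sl}_2$ detour; the cheaper repair in the paper's own coordinates is to note from $\tilde f_L(x)=-L+Z/2+\frac{1}{e^{2X}(L+Y/2+1/x)}$ that $\partial_L\tilde f_L(x)=-1-\frac{1}{e^{2X}(L+Y/2+1/x)^2}<0$, which is your (c) away from those exceptional points. Your closing caveat about choosing the lifts consistently in $L$ so that $\mc{F}(k,\cdot)$ is continuous (needed for the $L=\infty$ endpoint) is the same implicit convention the paper relies on, so it is not an additional gap.
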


\begin{proof}
	$\mc{J}(-L)$ decreases monotonically from $0$ to $-\pi$, so it suffices to prove that, given $u'(L) < 0$ for all $L \ge 0$, $\frac{\partial}{\partial L} \tilde{f}_L^\circ(u(L)) < 0$ for all $L \ge 0$.
Notice that
\[\frac{\partial} {\partial L}\tilde{f}_L^\circ(u(L)) = \frac{\partial}{\partial L} 2 \arctan(\tilde{f}_L(\tan(\frac{u(L)}{2}))) = \frac{\sec(\frac{u(L)}{2})^2 u'(L) \tilde{f}_L'(\tan(\frac{u(L)}{2}))}{1 + \tilde{f}_L(\tan(\frac{u(L)}{2}))^2}\]
thus it suffices to prove that $\tilde{f}_L'(\tan(\frac{u(L)}{2})) > 0$. In fact, $\tilde{f}_L'(x) > 0$ for all $x$: from the alternative way of writing $\tilde{f}_L(x) = - L + Z/2 + \frac{1}{e^{2 X}(L + Y / 2 + 1/x)}$ we see that the values of $X, Y, Z, L$ do not affect the sign of the derivative, and if we set all of these to be $0$ the derivative is simply $1$.
\end{proof}

The $\infty$ end is uninteresting, as the following lemma shows:

\begin{lemma}
	For any values of random variables and any $k$, $\lim_{L \to \infty} (\tilde{f}_L^\circ)^{(k)}(\mc{J}(-L)) = -(2k + 1) \pi$.
\end{lemma}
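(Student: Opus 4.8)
The plan is to induct on $k$, reducing to a single-step claim $(\star)$: if $\psi\colon[1,\infty)\to\mc{R}$ is continuous with $\psi(L)\to-(2m+1)\pi$ for some $m\in\mb{N}$, then $\tilde f_L^\circ(\psi(L))\to-(2m+3)\pi$. Granting $(\star)$, the base case $k=0$ is immediate: $\mc{J}(-L)$ is the distinguished lift $2\arctan(-L)\in(-\pi,0]$, which tends to $-\pi=-(2\cdot 0+1)\pi$. Since $\tilde f_L$, hence $\mc{J}\tilde f_L\mc{J}^{-1}$, hence $\tilde f_L^\circ$, depends continuously on $L$, the map $L\mapsto(\tilde f_L^\circ)^{(k)}(\mc{J}(-L))$ is continuous, and the inductive step is exactly $(\star)$ applied with $\psi(L)=(\tilde f_L^\circ)^{(k)}(\mc{J}(-L))$ and $m=k$. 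Moreover $\tilde f_L$ has positive derivative by the previous lemma, so it is an orientation-preserving homeomorphism of $\overline{\mb{R}}\cong S^1$ and $\tilde f_L^\circ$ is an increasing homeomorphism of $\mc{R}$ commuting with $\theta\mapsto\theta+2\pi$; by this equivariance it suffices to prove $(\star)$ for $m=0$, i.e.\ that $\psi(L)\to-\pi$ forces $\tilde f_L^\circ(\psi(L))\to-3\pi$.

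For that I would use the additive form $\tilde f_L(x)=-L+Z/2+\frac{1}{e^{2X}(L+Y/2+1/x)}$ already recorded above. The only $x$ with $\tilde f_L(x)=\infty$ is $x_\ast=-1/(L+Y/2)\to0$; for $x$ in $[0,\infty]$ or in $\{|x|\ge R\}$ (any fixed $R>0$) the quantity $L+Y/2+1/x$ stays positive and $\to\infty$, so $\tilde f_L(x)=-L+Z/2+o(1)$ \emph{uniformly} in such $x$, hence $\mc{J}(\tilde f_L(x))\to\mc{J}(\infty)=-1\in S^1$ uniformly. Differentiating the local expression $\tilde f_L^\circ(\theta)=2\arctan\!\big(\tilde f_L(\tan(\theta/2))\big)$ gives $(\tilde f_L^\circ)'(\theta)=\frac{\sec^2(\theta/2)\,\tilde f_L'(\tan(\theta/2))}{1+\tilde f_L(\tan(\theta/2))^2}$ with $\tilde f_L'(x)=\frac{e^{-2X}}{((L+Y/2)x+1)^2}$; on $[0,\pi]$ and on any fixed interval $[-\pi-\eta,-\pi+\eta]$ the numerator is $O(1)$ while $1+\tilde f_L(\tan(\theta/2))^2\gtrsim L^2$, so $(\tilde f_L^\circ)'=O(L^{-2})$ uniformly on those sets. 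In other words, for large $L$ the lift $\tilde f_L^\circ$ is nearly constant everywhere outside a window around $0$ that shrinks as $L\to\infty$.

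Assembling: (1) $\tilde f_L^\circ(0)$ is the distinguished lift of $\mc{J}(\tilde f_L(0))=\mc{J}(-L+Z/2)$, and since $L\mapsto-L+Z/2$ is affine (never $\infty$) this lift equals $2\arctan(-L+Z/2)\to-\pi$. (2) By the flatness estimate on $[0,\pi]$, $\tilde f_L^\circ(\pi)-\tilde f_L^\circ(0)=\int_0^\pi(\tilde f_L^\circ)'\,d\theta\to0$; with equivariance, $\tilde f_L^\circ(-\pi)=\tilde f_L^\circ(\pi)-2\pi\to-3\pi$. (3) Fix $\eta\in(0,\pi/2)$ and any $\delta\in(0,\pi/2)$. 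For $\theta\in[-\pi-\eta,-\pi+\eta]$ we have $|\tan(\theta/2)|\ge\cot(\eta/2)$, so by the uniform convergence above $\tilde f_L^\circ(\theta)$ lies within $\delta$ of the lattice $-\pi+2\pi\mb{Z}$ once $L$ is large; since $\tilde f_L^\circ([-\pi-\eta,-\pi+\eta])$ is connected and that $\delta$-neighbourhood is a disjoint union of intervals, the image lies within $\delta$ of a single lattice point, and as it contains $\tilde f_L^\circ(-\pi)\to-3\pi$ that point is $-3\pi$. Because $\psi(L)\in[-\pi-\eta,-\pi+\eta]$ eventually, $\tilde f_L^\circ(\psi(L))\in(-3\pi-\delta,-3\pi+\delta)$; letting $\delta\to0$ gives $(\star)$, and the induction then proves the lemma.

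The step I expect to be the main obstacle is pinning down the winding number — i.e.\ that the limit is exactly $-(2k+1)\pi$ and not shifted by an integer multiple of $2\pi$. This hinges on two delicate points. First, the normalization of $\tilde f_L^\circ$: the clean identity $\tilde f_L^\circ(0)=2\arctan(-L+Z/2)$, and hence $\tilde f_L^\circ(0)\to-\pi$ rather than some $-\pi+2\pi j$, depends on $\tilde f_L(0)=-L+Z/2$ being affine in $L$, so that the path it traces in $\overline{\mb{R}}$ never crosses $\infty$. Second, the uniform flatness of $\tilde f_L^\circ$ on $[0,\pi]$: here one must notice that although the Möbius derivative $\tilde f_L'(x)$ is only $O(1)$ near $x=0$, it is swamped in $(\tilde f_L^\circ)'$ by the denominator $1+\tilde f_L(x)^2\gtrsim L^2$, and that this control necessarily breaks inside the window near $x_\ast=-1/(L+Y/2)$ — which is exactly why that window is excluded from every interval used above. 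The remaining estimates are routine.
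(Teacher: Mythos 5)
Your proof is correct, but it takes a genuinely different route from the paper's. The paper makes the same two reductions you do (pass to a single step, and use equivariance/``linearity'' to treat only $t=-\pi$), but then it replaces the perturbed transformation by the noiseless $f_L$ (``as $L\to\infty$ the M\"obius transformation tends to the same limit as the noiseless version'') and pins down the branch by a computation in the $L$-direction: it evaluates $\frac{\partial}{\partial L} f_L^\circ(-\pi)=-\frac{2(L^2+1)}{L^4-L^2+1}$, integrates over $L\in[0,\infty)$, and uses $f_0^\circ=\mathrm{id}$ to get $f_L^\circ(-\pi)\to-3\pi$. You instead stay with the perturbed map at large $L$: the additive form $\tilde f_L(x)=-L+Z/2+e^{-2X}/(L+Y/2+1/x)$ shows it sends everything away from the pole $x_*=-1/(L+Y/2)$ uniformly close to $-L+Z/2$, you anchor the lift through the orbit of $0$ (which never crosses $\infty$, giving $\tilde f_L^\circ(0)=2\arctan(-L+Z/2)\to-\pi$), transport this across $[0,\pi]$ with the $O(L^{-2})$ flatness bound, and finish with equivariance plus uniform control on a neighbourhood of $-\pi$. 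Your route buys two things the paper glosses over: it avoids the noiseless comparison, which as written implicitly requires matching the \emph{lifts} of the perturbed and unperturbed families rather than just pointwise convergence of the M\"obius maps; and it supplies the uniformity needed because the argument $(\tilde f_L^\circ)^{(k)}(\mc{J}(-L))$ only tends to $-\pi$ rather than equaling it, besides making the induction on $k$ explicit. The paper's route buys brevity (one closed-form derivative and integral) and its homotopy-from-$L=0$ bookkeeping makes the branch choice automatic, whereas your argument rests on the normalization of the lift being the natural one, $\tilde f_0^\circ(0)=2\arctan(Z/2)\in(-\pi,\pi)$, i.e.\ the lift near the identity when the map is near the identity --- a convention the paper leaves implicit but uses in exactly the same way, so this is a shared assumption rather than a gap in your argument.
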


\begin{proof}
	$\lim_{L \to \infty} \mc{J}(-L) = - \pi$, so it suffices to prove that $\tilde{f}_L^\circ(t) = t - 2 \pi$ when $L$ goes to infinity assuming $t = - \pi \pmod{2 \pi}$. By linearity it suffices to prove this for $t = -\pi$. When $L$ goes to infinity, the Mobius transformation tends to the same limit as the noiseless version. So let $f^\circ_L$ denote the lift of the unperturbed single-step evolution. Notice that $\frac{\partial}{\partial L}f_L^\circ(t) = -\frac{-2 L^2 \cos (t)+2 L^2+4 L \sin (t)+4}{L^4+2 L^3 \sin (t)-\left(L^2-2\right) L^2 \cos (t)+2}$, which equals $-\frac{2 \left(L^2+1\right)}{L^4-L^2+1}$ for $t = -\pi$. We know $f_0^\circ$ is the identity, so we have $f_L^\circ(-\pi) - (-\pi) = \int_{L = 0}^{\infty} -\frac{2 \left(L^2+1\right)}{L^4-L^2+1} dL = - 2 \pi$.
\end{proof}

Therefore we can relate the signature of the Hessian to the value of $\mc{F}(N - 1, 0)$. Since $d^2 \Phi(z)$ is a real symmetric matrix, it has exactly $2N - 1$ distinct eigenvalues. By monotonicity, the number of positive eigenvalues is equal to the number of multiples of $2 \pi$ between $\mc{F}(N - 1, 0)$ and $-(2N-1) \pi$, which is $N - 1 - \lfloor \frac{-\mc{F}(N - 1, 0)}{2 \pi} \rfloor$. Since the Hessian is nondegenerate, we have $\tx{sign}~d^2 \Phi(z) = 2(N - 1 - \lfloor \frac{-\mc{F}(N - 1, 0)}{2 \pi} \rfloor) - (2N - 1) = - 2 \lfloor \frac{-\mc{F}(N - 1, 0)}{2 \pi} \rfloor - 1$. This deterministic relationship between the two random variables $\tx{sign}~d^2 \Phi(z)$ and $\mc{F}(N - 1, 0)$ allows us to translate the study of the probability distribution of the former to that of the latter.

At $L = 0$, the noiseless transformation is the identity so the effect is entirely that of the random noise. The evolution is given by repeatedly applying $\left(
\begin{array}{cc}
 \frac{Y Z}{2}+2 e^{-2 X} & Z \\
 Y & 2 \\
\end{array}
\right)$ to $0$. In the angular form, we have
\[\tilde{f}_0^\circ(t) = 2 \tan ^{-1}\left(\frac{\tan \left(\frac{t}{2}\right) \left(2 e^{-2 X}+\frac{Y Z}{2}\right)+Z}{2+Y \tan \left(\frac{t}{2}\right)}\right)\]
Now do Taylor expansion on $X, Y, Z$ to obtain the following one-step evolution estimate:
\[\mb{E}(\tilde{f}_0^\circ(t) - t) = \frac{7 c}{8 n} \sin(2 t) + o(n^{-1}) \hspace{1em} \mb{E}((\tilde{f}_0^\circ(t) - t)^2) = \frac{c}{4 n}(11 - 7 \cos(2 t)) + o(n^{-1})\]
which shows that the value varies slow enough such that, if we take $N \to \infty$, the discrete Markov chain converges to a continuous Ito diffusion process, as the following statement describes:

\begin{proposition}[\cite{valko2009continuum}, Proposition 23]
	Fix $T > 0$, and for each $n \ge 1$ let $(X_\ell^n \in \mb{R}^d, \ell = 1, \ldots, \lfloor n T \rfloor)$ be a Markov chain. Let $Y_\ell^n(x)$ be distributed as $X_{\ell+1}^n - X_{\ell}^n$ given that $X_\ell^n = x$. Define $b^n(t, x) = n \mb{E}(Y^n_{\lfloor n t \rfloor}(x))$ and $a^n(t, x) = n \mb{E}((Y^n_{\lfloor n t \rfloor}(x))^t (Y^n_{\lfloor n t \rfloor}(x)))$. Suppose that as $n \to \infty$, the following conditions hold for some $M$:
	\[|a^n(t, x) - a^n(t, y)| + |b^n(t, x) - b^n(t, y)| \le M|x - y| + o(1)\]
	\[\sup_{x, \ell} \mb{E}(|Y_\ell^n(x)|^3) \le M n^{-3/2}\]
	and that there exists some $a, b: \mb{R} \times [0, T] \to \mb{R}$ with bounded first and second derivatives such that
	\[\sup_{x, t}\left|\int_{0}^{t} a^n(s, x)ds - \int_{0}^{t} a(s, x)ds\right| + \sup_{x, t}\left|\int_{0}^{t} b^n(s, x)ds - \int_{0}^{t} b(s, x)ds\right| \to 0\]
	and that $X_0^{n}$ converges weakly to some distribution $X_0$, then $(X_{\lfloor n t \rfloor}^n, 0 \le t \le T)$ converges in law to the unique strong solution of the SDE (where $\sigma^t \sigma = a$)\footnote{The original paper had a typo and put $a$ in place of $\sigma$.}:
	\[dX_t = b(t, X_t) dt + \sigma(t, X_t) dB, X(0) = X_0\]
\end{proposition}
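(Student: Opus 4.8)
This is the standard diffusion-approximation argument in the spirit of Stroock--Varadhan (and of \cite{valko2009continuum}), and I would run it in four steps: an a priori moment bound, tightness of the rescaled chains on path space, identification of the limit points through the martingale problem, and well-posedness of that martingale problem to conclude. Throughout, embed the chain in continuous time by $\hat X^n_t = X^n_{\lfloor nt\rfloor}$, with $(\mc{G}^n_\ell)$ its natural filtration, so that $\hat X^n$ is a piecewise-constant element of $D([0,T],\mb{R}^d)$ with increments $Y^n_\ell$. Note first that the hypotheses force $b^n$ and $a^n$ to have at most linear growth in $x$ on $[0,T]$: the Lipschitz-in-$x$ bound controls $b^n(t,x)-b^n(t,0)$ and $a^n(t,x)-a^n(t,0)$, while the uniform integral-convergence hypothesis together with the regularity of $a$ and $b$ controls their values at $x=0$. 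Since $\mb{E}[|Y^n_\ell|^2]\le(\mb{E}[|Y^n_\ell|^3])^{2/3}\le(Mn^{-3/2})^{2/3}$, conditioning on $\mc{G}^n_\ell$ with $\mb{E}[Y^n_\ell\mid\mc{G}^n_\ell]=n^{-1}b^n(\tfrac{\ell}{n},\hat X^n_\ell)$ gives a recursion $\mb{E}|X^n_{\ell+1}|^2\le(1+C/n)\mb{E}|X^n_\ell|^2+C/n$; iterating over $\ell\le nT$, using tightness of $X^n_0$ (truncating on a high-probability event if needed), and a Doob--Gronwall argument, yields $\sup_n\mb{E}\big[\sup_{t\le T}|\hat X^n_t|^2\big]<\infty$.

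Second, for tightness I would write $Y^n_\ell = m^n_\ell+\xi^n_\ell$ with $m^n_\ell=\mb{E}[Y^n_\ell\mid\mc{G}^n_\ell]$, so that $A^n_t:=\sum_{\ell<\lfloor nt\rfloor}m^n_\ell$ is a predictable bounded-variation process whose increments over $[s,t]$ are, on the event $\{\sup_{[0,T]}|\hat X^n|\le R\}$, at most $C_R(t-s)+Cn^{-1}$, and $M^n_t:=\sum_{\ell<\lfloor nt\rfloor}\xi^n_\ell$ is a martingale with $\mb{E}[|\xi^n_\ell|^3\mid\mc{G}^n_\ell]\le Cn^{-3/2}$ and predictable quadratic variation satisfying $\langle M^n\rangle_t-\langle M^n\rangle_s\le C(t-s)+Cn^{-1}$. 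For stopping times $\tau_n\le\sigma_n\le\tau_n+\delta$, the a priori bound makes $\mb{E}[|A^n_{\sigma_n}-A^n_{\tau_n}|\wedge 1]$ uniformly small once $R$ is large and $\delta$ small, while optional sampling and the Burkholder--Davis--Gundy inequality give $\mb{E}[|M^n_{\sigma_n}-M^n_{\tau_n}|\wedge 1]\le(C\delta+Cn^{-1})^{1/2}$; hence Aldous's criterion, with compact containment from the a priori bound, yields tightness of $(\hat X^n)$ in $D([0,T],\mb{R}^d)$, and since $\mb{E}[\max_\ell|Y^n_\ell|]\le(nT\cdot Mn^{-3/2})^{1/3}\to 0$, every limit point is supported on $C([0,T],\mb{R}^d)$.

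Third, I would identify any subsequential weak limit $X$ — which, by $C$-tightness, may be realized via Skorokhod representation on a common probability space as an a.s. uniform limit of the $\hat X^n$ with continuous paths — as a solution of the martingale problem for $\mc{L}_t$, $\mc{L}_tf(x)=b(t,x)\cdot\nabla f(x)+\tfrac12\sum_{ij}a_{ij}(t,x)\,\partial_i\partial_jf(x)$. Fix $f\in C^\infty_c(\mb{R}^d)$. A second-order Taylor expansion of $f(\hat X^n_{\ell+1})-f(\hat X^n_\ell)$, combined with $m^n_\ell=n^{-1}b^n(\tfrac{\ell}{n},\hat X^n_\ell)+o(n^{-1})$, with $\mb{E}[(\xi^n_\ell)_i(\xi^n_\ell)_j\mid\mc{G}^n_\ell]=n^{-1}(a^n)_{ij}(\tfrac{\ell}{n},\hat X^n_\ell)+O(n^{-2})$ (the $m^n_\ell(m^n_\ell)^t$ correction being $O(n^{-2})$), and the remainder bound $\le\tfrac16\|D^3f\|_\infty\mb{E}[|Y^n_\ell|^3\mid\mc{G}^n_\ell]$, shows after summation that
\[
f(\hat X^n_{\lfloor nt\rfloor})-f(\hat X^n_0)-\int_0^t\Big(b^n(s,\hat X^n_s)\cdot\nabla f(\hat X^n_s)+\tfrac12\sum_{ij}(a^n)_{ij}(s,\hat X^n_s)\,\partial_i\partial_jf(\hat X^n_s)\Big)\,ds
\]
is a martingale up to an error that is $L^1$-small uniformly in $n$ (the summed Taylor remainder is $O(n^{-1/2})$, the summed covariance correction $O(n^{-1})$, the Riemann-sum discretization $O(n^{-1})$). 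It remains to pass to the limit in the bracketed integral: the Lipschitz-in-$x$ bounds let me replace $\hat X^n_s$ by $X_s$ inside $a^n,b^n$ using $\sup_{[0,T]}|\hat X^n-X|\to 0$, and then — approximating $s\mapsto X_s$ by step functions and using the Lipschitz-in-$x$ bound once more — the uniform integral-convergence hypothesis lets me replace $(a^n,b^n)$ by $(a,b)$; path continuity and continuity of $x\mapsto\mc{L}_sf(x)$ then give convergence of the integral to $\int_0^t\mc{L}_sf(X_s)\,ds$, boundedly and a.s. Dominated convergence and passage to the limit in the martingale identity show $f(X_t)-f(X_0)-\int_0^t\mc{L}_sf(X_s)\,ds$ is a martingale, so the law of $X$ solves the martingale problem for $(\mc{L}_t)$ with initial law $\lim_n\mathrm{Law}(X^n_0)$.

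Finally, since $a,b$ have bounded first and second derivatives, $b$ is globally Lipschitz and $\sigma=a^{1/2}$ is globally Lipschitz (the positive square root of a nonnegative symmetric $C^2$ matrix function with bounded Hessian is Lipschitz, with constant controlled by $\|D^2a\|_\infty^{1/2}$), so the SDE $dX_t=b(t,X_t)\,dt+\sigma(t,X_t)\,dB$ with the prescribed initial law admits a pathwise-unique strong solution; by the Stroock--Varadhan equivalence its law is the unique solution of the martingale problem for $(\mc{L}_t)$ with that initial law. Hence every subsequential limit of $(\hat X^n)$ equals this law, and by tightness $(\hat X^n)$ converges in law to it. The main obstacle is the third step: passing from the discrete conditional-expectation sums to the limiting integral, since the coefficients are evaluated at the moving random argument $\hat X^n_s$ and $a^n,b^n$ converge to $a,b$ only after integration in time — this is exactly what the Lipschitz-in-$x$ and uniform-integral hypotheses are tailored to absorb, and the bookkeeping needed to make every error term (Taylor remainder, covariance correction, Riemann-sum discretization, and the fluctuation from replacing a sum of conditional expectations by its compensating integral) vanish uniformly in $n$ is where the real work lies; the square-root-Lipschitz fact in the last step is standard but genuinely needed for strong uniqueness.
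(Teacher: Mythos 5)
This proposition is not proved in the paper at all: it is imported verbatim (modulo the typo fix noted in the footnote) from \cite{valko2009continuum}, whose Proposition 23 is in turn established by the classical Stroock--Varadhan/Ethier--Kurtz diffusion-approximation machinery. Your sketch --- a priori moment bound, Aldous tightness with $C$-tightness of limit points, identification of limits via the martingale problem for $\mc{L}_t$, and well-posedness from Lipschitz $b$ and $\sigma=a^{1/2}$ --- is exactly that standard route, so in substance you are reconstructing the cited proof rather than taking a different one. One place where your outline is looser than the stated hypotheses permit: the assumptions control $a^n,b^n$ in $x$ only up to an additive $o(1)$, and control their size and convergence in $t$ only \emph{after} time integration, so no pointwise-in-$t$ bound on $b^n(t,0)$ or $a^n(t,0)$ is available (the third-moment hypothesis alone gives only $|b^n(t,x)|\le M^{1/3}n^{1/2}$). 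Hence your opening claim that ``the hypotheses force $b^n$ and $a^n$ to have at most linear growth'' and the per-step drift bound of order $C/n$ in the $L^2$ recursion are not immediate; they require the same sum-versus-integral comparison along the chain (using the Lipschitz-in-$x$ bound plus the fact that the chain moves little over short time windows) that you correctly invoke in your identification step. Accommodating coefficients that converge only in this time-integrated sense is precisely the point of the proposition's weakened hypotheses, and that bookkeeping must be done already at the level of the moment and tightness estimates, not only when passing to the limiting generator. With that repair your argument is the standard one and matches the proof strategy of the cited source.
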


The exact parameters are $b(s, x) = \frac{7c}{8} \sin(2 x)$ and $\sigma(s, x) = \sqrt{\frac{c}{4}(11 - 7 \cos(2 x))}$ and $\theta_0$ is $\delta_0$, the Dirac distribution at $0$. Thus the evolution at $L = 0$ is governed by the distribution $\theta_1$ of the following stochastic differential equation:
\[d \theta_t = \frac{7 c}{8} \sin(2 \theta_t) d t + \sqrt{\frac{c}{4}(11 - 7 \cos(2 \theta_t))} d B\]
Let $\theta'_t = \theta_{t/c}$, then by the time-change formula (c.f. \cite[V.26]{rogers2000diffusions}), $\theta'_t$ satisfies the same equation as above for $c = 1$. Since $\theta_1 = \theta'_c$, to understand the dependence on $c$ is the same as understanding the dependence on $t$ after fixing $c = 1$.

The Fokker-Planck equation, given as follows, describes the evolution of the probability distribution function $p(t, x) = \tx{Pr}(\theta_t = x \mid \theta_0 = 0)$ as the unique solution to the following PDE:

\[\frac{\partial p(t, x)}{\partial t} = \frac{\partial}{\partial x}\left(b(x) p(t, x)\right) + \frac{\partial^2}{\partial x^2}\left(\frac{1}{2} \sigma(x)^2 p(t, x)\right), p(0, x) = \delta_0(x)\]

This parabolic PDE is not easy to solve, but we need not solve it explicitly. Note that $b(x)$ and $\sigma(x)$ are both bounded, with $\sigma(x)$ bounded from below by a positive number. This is enough to guarantee that the solution behaves basically like a heat diffusion. The necessary estimate was established by Norris and Stroock in \cite{norris1991estimates}. We use the following presentation by Arturo Kohatsu-Higa that is tailored to SDEs:

\begin{theorem}[\cite{kohatsu2003lower}]
	Suppose $X_t \in \mb{R}^d$ is the (strong) solution of the SDE $d X_t = b(t, X_t) dt + \sigma(t, X_t) dB$ for timespan $[0, T]$, and suppose that the measurable coefficient functions $b, \sigma$ satisfy the following uniform ellipticity requirement:
	\begin{itemize}
	\item $\sup_{t \in [0, T]}\left(\sup_{x \in \mb{R}}\|b(t, x)\| + \sup_{x \in \mb{R}}\|\sigma(t, x)\|\right) < \infty$
	\item There exists some $C \in \mb{R}$ such that $\sigma(t, x)^t \sigma(t, x) \ge C$ for all $t, x$.
	\end{itemize}
	Then $p(t, x)$, as described above, satisfies the following estimate for some $m, M \in [1, +\infty)$:
	\begin{equation}
	\frac{m \exp(-\frac{x^2}{m t})}{t^{1/2}} \ge p(t, x) \ge \frac{\exp(-M \frac{x^2}{t})}{M t^{1/2}}
	\end{equation}
\end{theorem}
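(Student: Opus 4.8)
The statement to prove is the two-sided Gaussian (Aronson-type) estimate for the transition density $p(t,x)$. The plan is to treat $p(t,x)$ as the $y=0$ slice of the fundamental solution $p(t,x,y)$ of the parabolic Cauchy problem attached to the Fokker--Planck operator, rewrite that operator in divergence form $\partial_t p = \partial_x\big(\tfrac12 \sigma(t,x)^2 \partial_x p\big) + \partial_x\big(\beta(t,x) p\big)$ with $\beta$ bounded (its boundedness follows from that of $b$ and $\sigma$ together with the ellipticity lower bound after distributing one derivative; in dimension one the first-order term is a subcritical perturbation, so throughout it contributes at worst multiplicative factors $e^{C(T)}$ that get absorbed into $m, M$), and then run the classical Aronson/Nash/De Giorgi--Moser programme. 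The two structure conditions in the hypotheses --- $\sigma^2 \ge C$ from below and $\|\sigma\|,\|b\|$ bounded from above --- are precisely what make that programme produce matching Gaussian bounds with constants depending only on $C$, those sup-norms, and $T$.

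\textbf{Upper bound.} First I would prove the on-diagonal estimate $p(t,x,y)\le C t^{-1/2}$. Testing the evolution against its own solution $u$ gives $\tfrac{d}{dt}\|u\|_2^2 \le -\tfrac{C}{2}\|u'\|_2^2 - \int \beta\, u\, u' \le -c\|u'\|_2^2 + C'\|u\|_2^2$ after Young's inequality. Inserting Nash's inequality $\|u\|_2^6 \le C_N\|u'\|_2^2\|u\|_1^4$ and using mass conservation $\|u(t)\|_1 \le \|u_0\|_1$ turns this into a differential inequality whose integration yields $\|u(t)\|_2 \le C t^{-1/4} e^{C't}\|u_0\|_1$; by duality (the backward propagator has the same structure) and the semigroup property this upgrades to $\|P_{s,t}\|_{L^1\to L^\infty}\le C(t-s)^{-1/2}$, i.e.\ $p(t,x,y)\le C t^{-1/2}$. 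To insert the Gaussian off-diagonal factor I would use Davies' exponential-weight method: for bounded Lipschitz $\psi$ with $|\psi'|\le\alpha$, the conjugated kernel $e^{\psi(x)}p(t,x,y)e^{-\psi(y)}$ solves an equation of the same type with an extra zeroth-order perturbation of size $O(\alpha^2)$ (because $\sigma$ is bounded); rerunning the Nash estimate gives $e^{\psi(x)}p(t,x,y)e^{-\psi(y)} \le C t^{-1/2} e^{c\alpha^2 t}$, and optimizing the linear choice of $\psi$ with $\alpha\sim |x-y|/t$ produces $p(t,x,y)\le C t^{-1/2}\exp(-c|x-y|^2/t)$. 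Setting $y=0$ is the claimed upper bound.

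\textbf{Lower bound.} The key ingredient here is the parabolic Harnack inequality for nonnegative solutions of our divergence-form equation with merely measurable coefficients --- the De Giorgi--Nash--Moser theorem --- which I would invoke as a black box: there is $H$, depending only on the ellipticity and boundedness constants, with $\sup_{Q^-}u \le H\inf_{Q^+}u$ over the standard backward/forward space-time cylinders of any scale $r$ with $r^2 \lesssim T$. Combined with mass conservation and the upper bound just proved, Harnack forces a near-diagonal lower bound $p(t,x,y)\ge c\,t^{-1/2}$ whenever $|x-y|\le K\sqrt t$: the upper bound controls the tail integral $\int_{|z-x|>K\sqrt t} p(t,x,z)\,dz \le \tfrac12$ for $K$ large, so a definite fraction of the unit mass sits in a ball of radius $K\sqrt t$, where by Harnack the density is comparable to its supremum, which is therefore $\gtrsim t^{-1/2}$. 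For general $x,y$ with $\rho:=|x-y|\ge K\sqrt t$, set $n:=\lceil \rho^2/(K^2 t)\rceil$, take equally spaced intermediate points $y=z_0, z_1,\dots,z_n=x$ at spacing $\rho/n \le K\sqrt{t/n}$, and apply Chapman--Kolmogorov over small balls $B(z_j,\delta\sqrt{t/n})$: each of the $n$ factors is $\ge c(t/n)^{-1/2}$ by the near-diagonal bound, the intermediate integrations contribute volumes $\asymp (t/n)^{1/2}$ each (one of which is eaten by the final density normalization), giving $p(t,x,y)\ge c(t/n)^{-1/2}\kappa^{\,n} \ge \tfrac{1}{M\sqrt t}\exp(-M|x-y|^2/t)$. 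Setting $y=0$ is the claimed lower bound.

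\textbf{Main obstacle.} The genuinely hard input is the parabolic De Giorgi--Nash--Moser theory --- local boundedness and the Harnack inequality --- for equations with only measurable uniformly elliptic coefficients plus a bounded first-order term; everything else (the Nash energy estimate, Davies' weight perturbation, the Chapman--Kolmogorov chaining, and the bookkeeping that keeps the constants' dependence confined to the ellipticity data and $T$) is routine once that is granted. A self-contained treatment would either reproduce Moser's iteration, which is lengthy, or --- following the route taken in the cited work of Kohatsu-Higa --- replace the analytic lower bound by a probabilistic one: the upper bound from an exponential (Bernstein-type) martingale inequality for the SDE combined with Girsanov to strip the bounded drift, and the lower bound from Malliavin calculus, via an integration-by-parts representation of $p(t,\cdot)$ together with a lower bound on the Malliavin covariance coming from uniform ellipticity, localized and chained to yield the Gaussian tail. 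Either way, the ellipticity hypotheses in the statement are exactly what make the two-sided bound go through, and the finite horizon $T$ is what lets the incurred $e^{C(T)}$ factors be absorbed into the single constants $m$ and $M$.
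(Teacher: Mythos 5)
This statement is not proved in the paper at all: it is imported verbatim as a citation (Kohatsu-Higa's presentation of the Norris--Stroock estimates), so there is no in-paper argument to measure your proposal against. Judged on its own terms, your outline is the classical Aronson programme (Nash iteration plus Davies' exponential weights for the upper bound, De Giorgi--Nash--Moser Harnack plus Chapman--Kolmogorov chaining for the lower bound), and that programme does deliver two-sided Gaussian bounds --- but for \emph{divergence-form} parabolic equations. The genuine gap is in your very first reduction: the Fokker--Planck equation for the density of the SDE is $\partial_t p = \tfrac12\,\partial_x^2(\sigma^2 p) - \partial_x(b\,p)$, which is in double-divergence (not divergence) form, and your rewriting $\partial_t p = \partial_x(\tfrac12\sigma^2\partial_x p) + \partial_x(\beta p)$ forces $\beta = b \pm \tfrac12\partial_x(\sigma^2)$. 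The theorem's hypotheses allow $\sigma$ to be merely measurable and bounded, so $\partial_x(\sigma^2)$ need not exist, let alone be bounded; the phrase ``after distributing one derivative'' silently assumes a Lipschitz-type regularity of $\sigma$ in $x$ that is not granted. This is not a cosmetic issue: for non-divergence or double-divergence equations with only measurable coefficients the De Giorgi--Nash--Moser machinery does not apply as stated (one is in Krylov--Safonov territory, where matching Gaussian lower bounds are genuinely harder), and this is exactly why Norris--Stroock and Kohatsu-Higa argue by probabilistic means (chained small-time estimates, Malliavin-calculus lower bounds on the density) rather than by Aronson's theorem. You acknowledge that probabilistic route in your closing paragraph but do not carry it out, so as written the proposal proves a weaker statement than the one quoted.

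Two mitigating remarks. First, in the paper's actual application $\sigma(x)=\sqrt{\tfrac{c}{4}(11-7\cos 2x)}$ and $b(x)=\tfrac{7c}{8}\sin 2x$ are smooth with bounded derivatives, so your divergence-form reduction is legitimate there, and your argument would suffice for the use the paper makes of the estimate (it would even be a reasonable way to make the paper self-contained under stronger hypotheses). Second, the rest of your outline --- the Nash/Davies upper bound, the Harnack-plus-chaining lower bound, and the bookkeeping confining the constants to the ellipticity data and $T$ --- is standard and correctly assembled; the only missing ingredient is an argument that works at the stated level of generality (measurable coefficients), which is precisely the content of the cited works.
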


Thus we immediately obtain that $\mb{E}(\theta_t^{2k}) = \Theta(t^k)$. For odd moments, notice that $p(t, -x)$ is another solution to the Fokker-Planck equation, so by uniqueness we have $p(t, -x) = p(t, x)$; in other words, all odd moments vanish. Theorem~\ref{thm:main} now follows Theorem~\ref{thm:morse-index}.

\newpage

\appendix

\section{Numerical Simulation}
\label{app:simulation}

As part of the initial attempt to understand the moments, we implemented a program in \ti{Mathematica} that allows us to generate discretizations of random paths in $\tx{Sp}(2n, \mb{R})$ and compute their Conley-Zehnder indices. The program generates random walks on the Lie group using the exponential Gaussian measure described in Section~\ref{sec:random-walk}, then explicitly computes a path extension to $W_{\pm}$ (as described in Section~\ref{sec:CZ-index}), applies the $\rho^2$ map to obtain a discrete sequence of points on $S^1$, and finally counts the winding number of this sequence.

The computation of path extension relies heavily on the fact that the ending point of the path is a symplectic matrix, and thus numerical precision becomes crucial. A naive repeated multiplication of matrices would cause loss of numerical precision as the entries of the matrix become large. This can be solved by decomposing a long path into the product of many small segments using the homotopy invariance of the index, and combine their Conley-Zehnder indices using the following product formula:

\begin{lemma}[\cite{de2008product}]
	Let $\Psi_1, \Psi_2: [0, 1] \to \tx{Sp}(2n, \mb{R})$ such that $\Psi_1(0) = \Psi_2(0) = I, \det(\Psi_1(1) - I) \neq 0, \det(\Psi_2(1) - I) \neq 0$. Define $\Psi(t) = \Psi_1(t) \Psi_2(t)$, then we have
	\[\mu_{\tx{CZ}}(\Psi) = \mu_{\tx{CZ}}(\Psi_1) + \mu_{\tx{CZ}}(\Psi_2) - \frac{1}{2} \tx{sign}(\mc{M}(\Psi_1(1)) + \mc{M}(\Psi_2(1)))\]
	where $\tx{sign}$ is the signature, and $\mc{M}$ is given by $\mc{M}(X) = \frac{1}{2} J_0 (X_0 + I) (X_0 - I)^{-1}$.
\end{lemma}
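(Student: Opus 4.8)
The plan is to reduce the product formula to the characterization of the Conley-Zehnder index given in Lemma~\ref{lemma:conley-zehnder-def}, exploiting the fact that both sides are homotopy invariants so we may deform $\Psi_1$ and $\Psi_2$ into convenient normal forms. First I would verify that the right-hand side is well defined: the matrix $\mc{M}(X) = \frac{1}{2} J_0 (X + I)(X - I)^{-1}$ is the Cayley-type transform sending a symplectic matrix with $\det(X - I) \neq 0$ to a symmetric matrix (one checks $\mc{M}(X)^t = \mc{M}(X)$ using $X^t J_0 X = J_0$), and moreover $X \mapsto \mc{M}(X)$ is a homeomorphism from $\tx{Sp}(2n,\mb{R}) \setminus \tx{Sp}_0$ onto the space of symmetric matrices, with the two connected components $\tx{Sp}_{\pm}$ mapping to symmetric matrices of even/odd coindex (this is the standard fact that $\det(X - I)$ and $\det \mc{M}(X)$ have related signs). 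Then $\tx{sign}(\mc{M}(\Psi_1(1)) + \mc{M}(\Psi_2(1)))$ depends only on the endpoints, and since $\mu_{\tx{CZ}}$ is homotopy-invariant, the whole identity depends only on the homotopy classes rel endpoints of $\Psi_1, \Psi_2$ together with the endpoints themselves.

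Next I would set up the bookkeeping via the $\rho^2$-degree description from Section~\ref{sec:CZ-index}: extend $\Psi_1$ and $\Psi_2$ past their endpoints to paths $\tilde\Psi_1, \tilde\Psi_2$ terminating at one of the base matrices $W_{\pm}$, so that $\mu_{\tx{CZ}}(\Psi_i) = \deg(\rho^2 \tilde\Psi_i)$. The product path $\tilde\Psi = \tilde\Psi_1 \tilde\Psi_2$ then ends at $W_{\epsilon_1} W_{\epsilon_2}$, which is one of $I$, $W_+$, or $W_-$ up to a correction, and since $\rho$ is only a retraction (not a homomorphism), $\rho^2(\tilde\Psi_1 \tilde\Psi_2)$ is not the pointwise product $\rho^2(\tilde\Psi_1)\rho^2(\tilde\Psi_2)$; however, the difference of winding numbers is computed by the loop $t \mapsto \rho^2(\tilde\Psi_1(t)\tilde\Psi_2(t)) \rho^2(\tilde\Psi_1(t))^{-1}\rho^2(\tilde\Psi_2(t))^{-1}$, and by the second property in Lemma~\ref{lemma:conley-zehnder-def} (the Maslov-index shift under composition with loops) this reduces to a Maslov-index computation. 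I would then reduce to the case where both $\Psi_i$ are of the exponential form $\exp(J_0 S_i t)$ with $S_i$ symmetric and small eigenvalues, so the first and third properties pin down $\mu_{\tx{CZ}}(\Psi_i) = \frac12 \tx{sign}(S_i)$, and in this normal form $\mc{M}(\Psi_i(1))$ is, to leading order, comparable to $S_i$ — more precisely one checks $\mc{M}(\exp(J_0 S)) $ is a symmetric matrix with the same inertia as $S$ via the monotone function calculus. The correction term $-\frac12 \tx{sign}(\mc{M}(\Psi_1(1)) + \mc{M}(\Psi_2(1)))$ is then exactly the Hörmander-type signature defect that measures the failure of signature to be additive, and the identity becomes an instance of the cocycle property of the Maslov triple index.

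Alternatively, and probably more cleanly, I would invoke the generating-function picture of Sections~\ref{sec:action-integral}–\ref{sec:CZ-index}: by Theorem~\ref{thm:morse-index}, $\mu_{\tx{CZ}}(\Psi_i)$ is the signature of the discrete-action Hessian $d^2\Phi_i$, and the Hessian of the concatenated action for $\Psi = \Psi_1\Psi_2$ decomposes as a block form built from those of $\Psi_1$ and $\Psi_2$ glued along the shared variables; the signature of such a glued quadratic form differs from the sum of the pieces by precisely the signature of the Schur complement on the overlap, which works out to $\mc{M}(\Psi_1(1)) + \mc{M}(\Psi_2(1))$ after identifying $\mc{M}$ with the boundary term of the generating function. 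I expect the main obstacle to be exactly this identification: keeping careful track of signs and of the ``$0/0$'' degenerate directions so that the Schur complement computation produces $\mc{M}(X) = \frac12 J_0(X+I)(X-I)^{-1}$ on the nose rather than up to an undetermined sign or a conjugation, and making sure the boundary contributions from the two generating functions add rather than cancel. Once that algebraic identity is in hand, the homotopy-invariance reductions above make the rest routine.
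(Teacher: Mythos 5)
This lemma is quoted in the paper from \cite{de2008product} without proof, so your sketch has to be measured against the argument in that reference, which indeed proceeds through the symplectic Cayley transform $\mc{M}$ and a cocycle (H\"ormander--Kashiwara signature) property of the Leray index --- so your first route points in the right direction. But as written it has genuine gaps. The decisive one: you defer the entire content of the theorem to the assertion that the defect ``is an instance of the cocycle property of the Maslov triple index.'' Proving that the failure of additivity of $\mu_{CZ}$ under \emph{pointwise} products is measured exactly by $\tfrac12 \tx{sign}(\mc{M}(\Psi_1(1)) + \mc{M}(\Psi_2(1)))$ \emph{is} the theorem; nothing in Lemma~\ref{lemma:conley-zehnder-def} hands it to you, and in \cite{de2008product} this identification (via the relation of $\mu_{CZ}$ to a Lagrangian Maslov/Leray index and the triple-index cocycle) is the bulk of the work. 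The proposed normal-form reduction also does not go through: paths $\exp(J_0 S t)$ with small spectrum have $|\mu_{CZ}| \le n$, so a general admissible path is only homotopic to a loop multiplied by such a path, and stripping the loops out of a pointwise product $\alpha_1\beta_1\,\alpha_2\beta_2$ requires commuting non-loop factors past loops, which the loop axiom alone does not justify (you would have to argue a separate homotopy, and you don't). A side claim is also false: $\mc{M}$ is not a homeomorphism onto all symmetric matrices --- symmetric $M$ with $\det\bigl(M - \tfrac12 J_0\bigr) = 0$ are not in its image (for $n = 1$, e.g.\ $M = \pmtx{0 & 1/2 \\ 1/2 & 0}$), though this is inessential for the formula itself.

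Your second route, via Theorem~\ref{thm:morse-index} and a Schur-complement computation, is closer in spirit to this paper and could plausibly be carried out, but it too has an unaddressed gap: the pointwise product $t \mapsto \Psi_1(t)\Psi_2(t)$ is not a concatenation, so its discrete-action Hessian is \emph{not} a gluing of the two Hessians along shared variables. One must first homotope $\Psi_1\Psi_2$ rel endpoints to the concatenation of $\Psi_1$ with $t \mapsto \Psi_1(1)\Psi_2(t)$ (legitimate because the endpoint is fixed; note the statement implicitly requires $\det(\Psi_1(1)\Psi_2(1) - I) \neq 0$ for $\mu_{CZ}(\Psi)$ to be defined, equivalently nondegeneracy of $\mc{M}(\Psi_1(1)) + \mc{M}(\Psi_2(1))$, a point your write-up never touches), and only then does a block/gluing structure appear. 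Even granting that, the identification of the resulting signature defect with $\tx{sign}(\mc{M}(\Psi_1(1)) + \mc{M}(\Psi_2(1)))$ ``on the nose'' is exactly the step you flag as the main obstacle and leave undone. So the proposal is a reasonable plan whose decisive algebraic steps --- the very ones that make the product formula true --- are missing.
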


For demonstrative purpose, we fix $n = 3$ and $N = 5000$ and vary $c$, and obtain the following data for moments. The definition of $M_i$ is the same as in Theorem~\ref{thm:main}. The following data table (generated with 500 trials in each case) along with the graphs for $M_2$ and $M_4$ suggest that the asymptotics we proved for $n = 1$ continue to hold for $n > 1$ (of course, odd moments won't vanish for finite data).

\[
\begin{array}{c|cccccccc}
	c & M_1 & M_2 & M_3 & M_4 & M_5 & M_6 & M_7 & M_8 \\ \hline
 5 & -0.096 & 2.104 & -0.684 & 14.272 & -5.916 & 173.824 & -62.844 & 2898.59 \\
 10 & -0.004 & 4.032 & -0.352 & 44.856 & 0.176 & 749.592 & 291.728 & 15739.4 \\
 15 & 0.02 & 5.72 & 0.524 & 92.072 & 5.9 & 2430.44 & -728.596 & 89878. \\
 20 & 0.11 & 7.15 & 4.106 & 150.55 & 187.37 & 4944.07 & 11437.5 & 217410. \\
 25 & -0.062 & 9.258 & -1.718 & 254.97 & -271.862 & 12454. & -41352.8 & 890818. \\
 30 & 0.068 & 12.88 & 11.624 & 481.624 & 837.608 & 27475.2 & 54846.1 & 1.97698\times 10^6 \\
\end{array}
\]

\begin{figure}[H]
\centering
 \subfloat[$M_2$ with linear fit]{\includegraphics[width=0.5\textwidth]{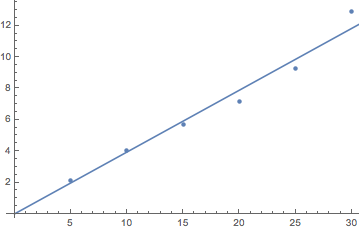}}
  \subfloat[$M_4$ with quadratic fit]{\includegraphics[width=0.5\textwidth]{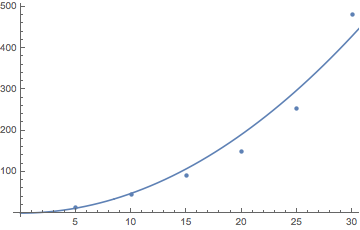}}
\end{figure}

Our proof in the $n = 1$ case actually yields a better description than asymptotics and says that the distribution of Conley-Zehnder indices should approximate normal distribution as $c \to \infty$. The following histograms, generated using the same data as above, support this claim.

\begin{figure}[H]
  \centering   
  \subfloat[$c = 5$]{\includegraphics[width=0.3\textwidth]{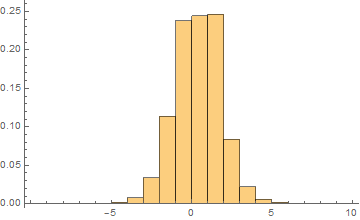}}
  \subfloat[$c = 10$]{\includegraphics[width=0.3\textwidth]{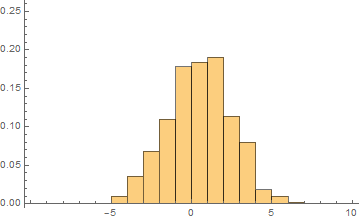}}
  \subfloat[$c = 15$]{\includegraphics[width=0.3\textwidth]{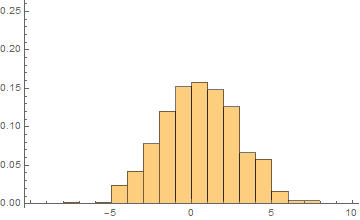}} \\
  \subfloat[$c = 20$]{\includegraphics[width=0.3\textwidth]{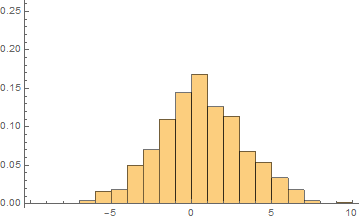}}
  \subfloat[$c = 25$]{\includegraphics[width=0.3\textwidth]{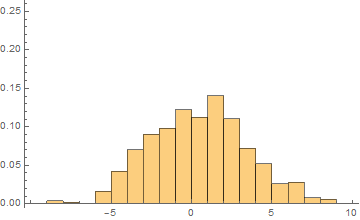}}
  \subfloat[$c = 30$]{\includegraphics[width=0.3\textwidth]{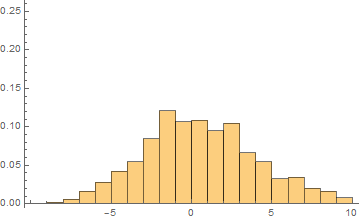}}
\end{figure}

\nocite{*}
\printbibliography[heading=bibliography]

\end{document}